\newcommand{\LV}{\left|}
\newcommand{\RV}{\right|}
\newcommand{\LN}{\left\|}
\newcommand{\RN}{\right\|}
\newcommand{\LB}{\left[}
\newcommand{\RB}{\right]}
\newcommand{\LC}{\left(}
\newcommand{\RC}{\right)}
\newcommand{\LA}{\left<}
\newcommand{\RA}{\right>}
\theoremstyle{plain}
\newtheorem{thm}{Theorem}[section]
\newtheorem{prop}{Proposition}[section]
\newtheorem{cor}[prop]{Corollary}
\newtheorem{lem}[prop]{Lemma}
\theoremstyle{definition}
\newtheorem{rem}{Remark}[section]
\numberwithin{equation}{section}
\def\squarebox#1{\hbox to #1{\hfill\vbox to #1{\vfill}}}
\title[On the well-posedness of a weakly dispersive Boussinesq system]{On the well-posedness of a weakly dispersive one-dimensional Boussinesq system}
\author[R. M. Chen]
{Robin Ming Chen}
\address{Robin Ming Chen\newline
Department of Mathematics\\
University of Pittsburgh\\
Pittsburgh, PA 15260} \email{mingchen@pitt.edu}
\author[Y. Liu]
{Yue~Liu}
\address{Yue Liu\newline
University of Texas at Arlington, Department of Mathematics, Arlington, TX 76019-0408}
\email{yliu@uta.edu}
\thanks{The work of R. M. Chen was partially supported by the NSF grant DMS-0908663. The work of Y. Liu was partially supported by the NSF grant DMS-0906099 and the NHARP grant 003599-0001-2009.}
\begin{document}

\begin{abstract}
We study the Cauchy  problem for one-dimensional dispersive system of Boussinesq type which models weakly nonlinear long wave surface waves. We establish the local well-posedness and ill-posedness of solutions to the system. We also provide criteria for the formation of singularities.
\end{abstract}
\maketitle






%

\section{Introduction}\label{sec_intro}


The theory of water waves embodies the equations of fluid mechanics,
the concepts of wave propagation, and the critically important role
of boundary dynamics. Due to the complexity of the governing equations for water waves, physicists and mathematicians are led to derive simpler sets of equations likely to describe the dynamics of the water waves in some specific physical regimes.

One regime that arises in practical situations is that of waves in a channel of approximately
constant depth $h$ that are uniform across the channel, and which are of small
amplitude and long wavelength. Similar waves appear as long-crested disturbances on larger bodies of water. Let $a$ be a typical wave amplitude and $\lambda$ a typical wavelength, the conditions above amount to
\begin{equation*}
\varepsilon = {a\over h} \ll1, \quad \delta = {h^2\over \lambda^2} \ll 1.
\end{equation*}

The equations within the above scaling regime couple the free surface elevation $\eta$ to the horizontal component of the velocity $v$. When the nonlinear and dispersive effects are balanced, that is, $\varepsilon = O(\delta)$, and for one dimensional surfaces and flat bottoms, at the asymptotic expansion to the first order in $\varepsilon$, the system reduces to the well-known Boussinesq equation \cite{Bous, Craig, KanoNishida, Ursell}
\begin{equation*}
v_{tt} - v_{xx} - {3\varepsilon\over2} (v^2)_{xx} - {\varepsilon\over3} v_{xxxx} = 0,
\end{equation*}
(note that $\eta = v + O(\varepsilon)$) and any perturbation of the surface splits up into two components moving in opposite directions.

Taking advantage of the freedom associated with the choice of the velocity
variable and making full use of the lower-order relations (the wave equation written
as a coupled system) in the dispersive terms, Bona-Chen-Saut [9] put forward the following three-parameter family of Boussinesq systems (referred to as the $abcd$ system) for one dimensional surface (a two-dimensional analogue is derived by Bona-Colin-Lannes \cite{BonaColinLannes}),
\begin{equation}\label{abcdsys}
\left\{
\begin{split}
 & \eta_t + v_x + (\eta v)_x + a v_{xxx} - b \eta_{txx} = 0 \\
 & v_t + \eta_x + vv_x + c \eta_{xxx} - d v_{txx} = 0,
\end{split}\right.
\end{equation}
which is formally equivalent models of solutions of the Euler equations. In the above system, $\eta(t,x)$ is proportional to the deviation of the free surface from its rest position, and $v(t,x)$ is proportional to the horizontal velocity taken at the scaled height $\theta$ with $0\leq \theta \leq 1$ ($\theta=1$ at the free surface and $\theta=0$ at the bottom), and
\begin{equation*}
\displaystyle a = \left({\theta^2\over2} - {1\over6} \right)\nu,\quad  \displaystyle b = \left({\theta^2\over2} - {1\over6} \right)(1 - \nu) ,\quad
\displaystyle c = {(1-\theta^2)\over2}\mu - \tau, \quad  \displaystyle d = {(1-\theta^2)\over2}(1 - \mu)
\end{equation*}
with $\nu$ and $\mu$ arbitrary real numbers, and $\tau\geq0$ the surface tension. Throughout this paper we will just be dealing with the pure gravity waves, that is, $\tau=0$.

Preliminary work on well-posedness of \eqref{abcdsys} is detailed in \cite{BCS2} for various collections of $a, b, c$, and $d$. The authors proved that all the linearly well-posed systems are locally nonlinearly well-posed. Notice that an interesting feature of \eqref{abcdsys} is that its dispersive properties can be quite different according to the choice of $a, b, c$, and $d$. In fact the dispersive relation is
\begin{equation*}
\omega^2(\xi) = - |\xi|^2 {(1 -  a |\xi|^2)(1 -  c |\xi|^2) \over (1 +  b |\xi|^2)(1 +  d |\xi|^2)},
\end{equation*}
and hence the $(a, c)$ pair enhances dispersion while the $(b, d)$ pair weakens it. In this paper we will concentrate on the case of weak dispersion, that is, $a=c=0$ and $b=d={1\over6}$. After rescaling $b$ and $d$ to 1 the system becomes
\begin{equation}\label{bbm2}
\left\{
\begin{split}
 & \eta_t - \eta_{txx} + v_x + (\eta v)_x = 0, \\
 & v_t - v_{txx} + \eta_x + vv_x = 0, \\
\end{split}\right.
  \quad x \in \mathbb{R},  \; t \ge 0.
\end{equation}

One of our goals is to obtain a sharp well-posedness result of the above system. It has been shown in \cite{BCS2} through a contraction principle that system \eqref{bbm2} is well-posed in Sobolev spaces $H^s(\mathbb{R})\times H^s(\mathbb{R})$ for $s\geq0$. Using the idea of Bejenaru-Tao \cite{BeTao} we manage to prove, via explicitly constructed initial data, that the flow map fails to be continuous below $L^2(\mathbb{R})\times L^2(\mathbb{R})$. The idea stems from looking at the second order iterates in a Picard iteration about the chosen initial data, which leads to a {\em high-to-low frequency interaction}: a solution starts off initially with small energy and Fourier transform supported primarily at high frequencies, but quickly generates a large energy at low frequencies. This norm explosion indicates that the solution operator, if it exists at all, has a severe singularity near the zero solution.

Our ill-posedness threshold agrees with the one for the BBM equation \cite{BoTz}. However, system \eqref{bbm2} involves both the surface $\eta$ and velocity $v$ components and consequently generates many more interactions in the second iteration map than in the scalar BBM case. Our initial data is chosen to minimize the surface-velocity interactions but to maximize the frequency interactions of the linear propagators in the nonlinear terms. Similar techniques were used in proving a $C^3$ ill-posedness of the full gravity-capillary water wave problem \cite{CMSW}.

Note that the weak dispersive system \eqref{abcdsys} corresponding to $a=c=0$ does possess global-in-time solutions. For the particular value $\theta^2={7\over9}$, the system features exact solitary-wave solutions \cite{Ch1}. However for general $\theta\in[0,1]$ the existence of solitary waves is still open.

From modeling point of view, it is interesting to consider the following scaled asymptotic model of \eqref{bbm2}
\begin{equation}\label{scaledbbm2}
\left\{
\begin{split}
 & \eta_t + v_x + \varepsilon \LB (\eta v)_x - \eta_{txx} \RB  = 0, \\
 & v_t + \eta_x + \varepsilon \LB vv_x - v_{txx} \RB = 0. \\
\end{split}\right.
\end{equation}
A natural but important question about the asymptotic water wave models is the justification of their hydrodynamic relevance, which involves two issues, whether or not  (i) solutions to the full water wave equations which satisfy the modulational Ansatz exist for a sufficiently long time, and (ii) solutions to the asymptotic models approach the solutions to the full water waves equations within certain time.

It was proved in \cite{AL} that the existence time of the full water wave equations in the long wave regime ($\varepsilon = O(\delta)$) is  order $O(1/\varepsilon)$, and hence the large-time existence of solutions is provided. In regard to the second question, it was shown  in \cite{BonaColinLannes, Chazel} that {\it all} Boussinesq models give an approximation of order $O(\varepsilon^2 t)$ to the full water waves equations, provided the solutions exist. Therefore, the justification of  system \eqref{scaledbbm2} requires a large time well-posedness of the Cauchy problem.

For two dimensional surfaces, it was shown by Bona-Colin-Lannes \cite{BonaColinLannes} using a hyperbolic approach that the  existence time of solutions to system \eqref{abcdsys} is  order $O(1/\varepsilon)$  in $H^{{3\over2}+}(\mathbb{R}^2)$ when $a=c$ and the system is symmetric. In the strongly dispersive case when $a=c=1/6$, with $b=d=0,  \text{ and } \tau = 0$, it was proved by Linares-Pilod-Saut \cite{LPS} that the solutions to system \eqref{abcdsys} exist in a time scale of order $O(1/\sqrt{\varepsilon})$ in $H^{{3\over2}+}(\mathbb{R}^2)$. On the other hand, it was also proved in \cite{DMS} that when $b, d>0$ system \eqref{abcdsys} is well-posed in the Sobolev space $H^1(\mathbb{R}^2)$ on the time scale of order $O({\varepsilon}^{-{1\over2}+})$. The only success in reaching the optimal time scale for general $a, b, c$ and $d$, to our knowledge, is due to Ming-Saut-Zhang \cite{MSZ} for $a, c \leq0 $ and $  b, d > 0$, where a Nash-Moser approach is applied when the initial data are smooth (and with a loss of derivatives).

For one dimensional surfaces, for properly chosen initial data $ \eta_0 $ and $ v_0 $ with $ v_0 = \eta_0 + O(\varepsilon) \eta_0^2, $  it was proved in \cite{AABCW} that the solutions to system \eqref{bbm2} persists on the time scale of order $O(1/\varepsilon)$ with an extra high regularity assumption. One of our goals here is to obtain the well-posedness of system \eqref{scaledbbm2} ($a=c=0, b=d>0$) on time scales of order $O(1/\sqrt{\varepsilon})$ for general initial data in a much rougher space, achieving the rigorous justification of the model on the corresponding time scales. In general, we, however, are not able in our functional setting to reach the optimal time scales of order $O(1/\varepsilon)$.

Another aspect of this paper concerns the formation of singularities of system \eqref{bbm2}. Applying an energy-type argument we show that solutions blow up in finite time when $v_x$ is not bounded below. Together with the use of the Hamiltonian functional, we further prove that blow-up in finite time happens if and only if $\eta$ becomes unbounded below,  which improves the result of global existence obtained in \cite{AABCW} and also  seems to agree with some numerical predictions in \cite{Ch2}.

Since dissipation effects are ignored in the derivation of \eqref{abcdsys} and the overlying Euler system is Hamiltonian, it is expected that some special cases of \eqref{abcdsys} will also possess a Hamiltonian form. In particular, for our case system \eqref{bbm2}, the following functional
\begin{equation}\label{Hamiltonian}
\mathcal{H}(\eta,v) = {1\over2} \int_{\mathbb{R}} \LB \eta^2 + (1+\eta) v^2 \RB \ dx
\end{equation}
serves as a Hamiltonian. In addition, system \eqref{bbm2} has the following conserved quantities
\[
\int_\mathbb{R} \eta\ dx, \quad \int_\mathbb{R} v\ dx, \quad \int_\mathbb{R} \LC \eta v + \eta_x v_x \RC \ dx
\]
along with $\mathcal{H}(\eta, v)$ (\cite{BCS2}).

The plan of the paper is as follows. In Section 2, we introduce the notations and function spaces to work with. In Section 3, we establish the analytical well-posedness of system \eqref{bbm2} in Sobolev spaces $H^s(\mathbb{R})\times H^s(\mathbb{R})$ for $s\geq0$, and we further show that the time scale of existence is of order $\varepsilon^{-1/2}$. In Section 4, we prove that system \eqref{bbm2} is $C^0$ ill-posed below $L^2(\mathbb{R})\times L^2(\mathbb{R})$. In Section 5, we discuss the formation of singularity and provide a series of blow-up criteria. Finally in Appendix, we give the decay estimates of the linearized equation.
\vskip 0.2cm

\section{Resolution space}\label{sec_notation}

In this section we introduce a few notation and we define our functional
framework.

Let $ \vec u(t,x) = ( \eta(t,x), v(t,x)) $. 
Denote $ |\vec u(t)|_{p,q} \equiv |\eta(t)|_p + |v(t)|_q $ the norm in $ L^p \times L^q, $ for $ 1 \le p, \, q \le + \infty$, where  $|\cdot|_p $ refers to the $ L^p \equiv L^p(\mathbb{R}, dx) $ norm. Let $ \|\vec u(t) \|_{s} \equiv \|\eta(t)\|_{s} + \|v(t)\|_{s} $ for $ s \ge 0,$  denote the norm of Sobolev space  $ X^s  = H^s(\mathbb{R}) \times H^{s}(\mathbb{R}) $ where $ \| u\|_s $ is the norm of Sobolev space $ H^s(\mathbb{R}). $

For $u=u(t,x)\in \mathcal{S}'(\mathbb{R}^2)$, let $\hat{u}$ be its Fourier transform in space. We define the Japanese bracket $\left<x\right> = (1+|x|^2)^{1/2}$. The notation $A \sim B$ means that there exists a constant $c \geq 1$ such that ${1\over c} |A| \leq |B| \leq c|A|$. For any positive $A$ and $B$, the notation $A \lesssim B$ (resp. $A \gtrsim B$) means that there exists a positive constant $c$ such that $A \leq cB$ (resp.
$A \geq cB$).

Notice that one may rewrite \eqref{bbm2} as the following abstract form
\begin{equation}\label{absbbm2}
\vec u_t = A \vec u + N(\vec u)
\end{equation}
where
\begin{equation}\label{notation}
 A = - (1-\partial_x^2)^{-1} \partial_x \begin{pmatrix} 0 & 1 \\ 1 & 0 \end{pmatrix},  N(\vec u) = - (1-\partial_x^2)^{-1}\partial_x  \begin{pmatrix} \eta v  \\ {1 \over 2} v^2 \end{pmatrix}. 
\end{equation}
Then the linear semi-group $S(\cdot)$ associated with \eqref{bbm2} is given by
\begin{equation}\label{linop}
\LC\widehat{S(t)\vec\phi}\RC(\xi) = \begin{pmatrix}  \cos (t \xi \left < \xi \right >^{-2} ) & i  \sin (t \xi \left < \xi \right >^{-2} ) \\ i  \sin (t \xi \left < \xi \right >^{-2} ) & \cos (t \xi \left < \xi \right >^{-2} ) \end{pmatrix} \widehat {\vec \phi} (\xi).
\end{equation}

In this way, we will mainly work on the following integral formulation of \eqref{bbm2} with initial data $\vec{u}_0$
\begin{equation}\label{duhamel}
\begin{split}
{\vec u}(t) &= S(t)\vec u_0 + \int^t_0 S(t - t') N\LC\vec u(t') \RC\ dt'\\
&\equiv S(t) \vec u_0 + N_2 (\vec u, \vec u),
\end{split}
\end{equation}
where for $\vec u(t) = (u_1, u_2)(t)$ and $\vec v(t) = (v_1, v_2)(t)$
\begin{equation}\label{defnN2}
N_2({\vec u, \vec v}) = -{1\over2}\int^t_0 S(t-t')(1-\partial_x^2)^{-1}\partial_x \begin{pmatrix} u_1v_2 + u_2v_1\\ u_2v_2 \end{pmatrix} (t') \ dt'
\end{equation}
is a bilinear operator.

We can apply a Picard iteration (fixed point argument) to solve the above integral equation locally in time in the space $X^s_T = C([0, T], X^s)$ equipped with the usual norm
\begin{equation*}
\| \vec u \|_{X^s_T} = \sup_{[0, T]} \| \vec u(t, \cdot) \|_{s}.
\end{equation*}

Clearly, the free evolution operator $S(t)$ is unitary on $X^s$ for any $t\geq0$, that is
\begin{equation*}
\|S(t)\vec u_0\|_{s} = \|\vec u_0\|_{s},
\end{equation*}
and hence for any $T>0$,
\begin{equation}\label{linpart}
 \|S(t)\vec u_0\|_{X^s_T} = \|\vec u_0\|_{s}.
\end{equation}
Similarly,
\begin{equation}\label{nlpart}
\|N_2(\vec u, \vec u)\|_{X^s_T}\leq T\|S(t)N\LC\vec u(t) \RC\|_{X^s_T} = T\|N\LC\vec u(t) \RC\|_{X^s_T}.
\end{equation}
Therefore the Picard iteration can be used if we can establish a bilinear estimate of $\|N\LC\vec u(t) \RC\|_{X^s_T}$.

\section{Local well-posedness}\label{sec_local}

The goal of this section is to establish the local analytical well-posedness of system \eqref{bbm2}. As is explained above, we would first need a bilinear estimate for the nonlinear part in the integral formulation \eqref{duhamel}. This is achieved using the following lemma , which can be found in \cite{BoTz}.
\begin{lem}[\cite{BoTz}]\label{lem_bilnear}
Let $u,v\in H^s(\mathbb{R})$, $s\geq0$. Then
\begin{equation}\label{bilinearest}
\|(1 - \partial_x^2)^{-1}\partial_x(uv)\|_{H^s}  \leq C_s \|u\|_{H^s}\|v\|_{H^s},
\end{equation}
where $C_s>0$ is some constant depending only on $s$.
\end{lem}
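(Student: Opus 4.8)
The plan is to reduce the inequality to a statement about the bilinear operator on the Fourier side and to handle the multiplier $|\xi|\langle\xi\rangle^{-2}$ that comes from $(1-\partial_x^2)^{-1}\partial_x$. First I would write the product $uv$ in frequency, so that for the target frequency $\xi$ one has $\widehat{uv}(\xi)=\int \hat u(\xi-\zeta)\hat v(\zeta)\,d\zeta$, and therefore
\begin{equation*}
\widehat{(1-\partial_x^2)^{-1}\partial_x(uv)}(\xi)=\frac{i\xi}{\langle\xi\rangle^{2}}\int \hat u(\xi-\zeta)\hat v(\zeta)\,d\zeta.
\end{equation*}
The $H^s$ norm of the left side is $\big\|\langle\xi\rangle^{s}\,|\xi|\langle\xi\rangle^{-2}\,\widehat{uv}(\xi)\big\|_{L^2_\xi}$, so the whole matter is to bound $\langle\xi\rangle^{s-1}|\xi|\langle\xi\rangle^{-1}$ against the weights $\langle\xi-\zeta\rangle^{s}$ and $\langle\zeta\rangle^{s}$ that we want to place on $\hat u$ and $\hat v$. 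Since $|\xi|\langle\xi\rangle^{-1}\le 1$, the dangerous factor is just $\langle\xi\rangle^{s-1}\le\langle\xi\rangle^{s}$, which is harmless for $s\ge 0$; the gain of one derivative from the operator is what saves us and is exactly the feature that makes the BBM-type smoothing work.

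The key step is the elementary frequency inequality
\begin{equation*}
\langle\xi\rangle^{s}\lesssim \langle\xi-\zeta\rangle^{s}+\langle\zeta\rangle^{s},\qquad s\ge 0,
\end{equation*}
which follows from $\langle\xi\rangle\le\langle\xi-\zeta\rangle\langle\zeta\rangle$ (Peetre's inequality) together with the crude bound $\langle\xi\rangle^{s}\le(\langle\xi-\zeta\rangle+\langle\zeta\rangle)^{s}\lesssim\langle\xi-\zeta\rangle^{s}+\langle\zeta\rangle^{s}$. Using this splits the estimate into two symmetric pieces; in the first I move the weight onto $\hat u$ and in the second onto $\hat v$. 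After this step the problem becomes an $L^2$ bound on an ordinary convolution: for the first piece I need
\begin{equation*}
\Big\|\int \langle\xi-\zeta\rangle^{s}|\hat u(\xi-\zeta)|\,|\hat v(\zeta)|\,d\zeta\Big\|_{L^2_\xi}\lesssim \|\langle\cdot\rangle^{s}\hat u\|_{L^2}\,\|\langle\cdot\rangle^{s}\hat v\|_{L^2}.
\end{equation*}

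To close this I would bound one of the two factors in $L^1_\zeta$ and the other in $L^2$, via Young's inequality $\|f*g\|_{L^2}\le\|f\|_{L^1}\|g\|_{L^2}$. The remaining obstacle is that $\|\langle\cdot\rangle^{s}\hat v\|_{L^1}$ is not controlled by $\|\langle\cdot\rangle^{s}\hat v\|_{L^2}=\|v\|_{H^s}$ without extra decay, so a naive Young estimate fails exactly at $s=0$. The standard remedy, and the step I expect to be the technical heart of the argument, is to borrow half a power of the weight: write $\langle\zeta\rangle^{s}|\hat v(\zeta)|=\langle\zeta\rangle^{-1/2-\epsilon}\cdot\langle\zeta\rangle^{s+1/2+\epsilon}|\hat v(\zeta)|$ and apply Cauchy--Schwarz, which forces one to recover the lost half-derivative from the smoothing factor $|\xi|\langle\xi\rangle^{-1}$ rather than throwing it away at the start. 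Concretely I would keep the operator multiplier intact and prove the cleaner bound in the form
\begin{equation*}
\Big\|\,\frac{\langle\xi\rangle^{s}|\xi|}{\langle\xi\rangle^{2}}\int \hat u(\xi-\zeta)\hat v(\zeta)\,d\zeta\Big\|_{L^2_\xi}\lesssim \|u\|_{H^s}\|v\|_{H^s},
\end{equation*}
using the full factor $\langle\xi\rangle^{s-1}$ together with Peetre and a Cauchy--Schwarz in $\zeta$ with a weight $\langle\zeta\rangle^{-1-2\epsilon}$ that is $L^1$-integrable; the resulting constant depends only on $s$ (and the fixed choice of $\epsilon$), which is precisely the claimed $C_s$. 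Since the structure of this argument is already carried out for the scalar BBM equation in \cite{BoTz}, I would invoke that computation, checking only that the multiplier $|\xi|\langle\xi\rangle^{-2}$ here coincides with theirs and that no sign or symmetry of the nonlinearity is used.
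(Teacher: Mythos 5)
Your setup is right and you correctly diagnose the danger: the plain Young/algebra argument ($\|\hat v\|_{L^1}\lesssim\|v\|_{H^s}$) only works for $s>1/2$, so something extra is needed down to $s=0$. But the fix you propose does not close. Applying Cauchy--Schwarz in the \emph{input} variable $\zeta$ with a weight $\langle\zeta\rangle^{-1-2\epsilon}$ leaves you needing to control $\int\langle\zeta\rangle^{1+2\epsilon}\langle\zeta\rangle^{2s}|\hat v(\zeta)|^2\,d\zeta=\|v\|^2_{H^{s+1/2+\epsilon}}$, i.e.\ you have spent half a derivative on $v$ that you must recover from the multiplier $|\xi|\langle\xi\rangle^{-2}$. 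That multiplier, however, lives at the \emph{output} frequency $\xi$, and in the high-high-to-low interaction ($|\zeta|\sim|\xi-\zeta|\sim N$ while $|\xi|\lesssim 1$) it is of size $O(1)$ and offers no pointwise gain whatsoever; there is no inequality of the form $\langle\xi\rangle^{-1}\lesssim\langle\zeta\rangle^{-1/2-\epsilon}$ available there. This is not a removable technicality: that interaction regime is exactly the one exploited in Section 4 of the paper to prove ill-posedness below $L^2$, so any proof that tries to win by pointwise decay of the symbol in that regime must fail.

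The argument that actually works --- and the one the paper runs for the scaled analogue in Lemma 3.3 (Lemma \ref{lem_scaled}), which is the $\varepsilon=1$ version of the present statement from \cite{BoTz} --- exploits the \emph{global square-integrability} of the multiplier rather than its pointwise size. After using $\langle\xi\rangle^{s}\lesssim_s\langle\xi_1\rangle^{s}\langle\xi-\xi_1\rangle^{s}$ to discard the Sobolev weights (legitimate for $s\ge0$), one dualizes against $\hat w\in L^2$ and sets $\hat w_1(\xi)=\frac{\xi}{1+\xi^2}\overline{\hat w(\xi)}$. Since $\frac{\xi}{1+\xi^2}\in L^2_\xi(\mathbb{R})$, Cauchy--Schwarz in the \emph{output} variable gives $\|\hat w_1\|_{L^1}\lesssim\|w\|_{L^2}$, and then the trilinear form is estimated by rearranging the convolution and applying Young's inequality $\|f*g\|_{L^2}\le\|f\|_{L^1}\|g\|_{L^2}$ with the $L^1$ role played by the multiplier-weighted dual function:
\begin{equation*}
\LV\LA \hat u*\hat v,\bar w_1\RA\RV\le\|u\|_{L^2}\,\|\hat w_1\|_{L^1}\,\|v\|_{L^2}\lesssim\|u\|_{L^2}\|v\|_{L^2}\|w\|_{L^2}.
\end{equation*}
In particular the low-output-frequency region is handled because $\{|\xi|\le1\}$ has finite measure (so the multiplier is still $L^2$ there), not because the symbol is small. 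If you reorganize your write-up around this duality step, citing that $\xi\langle\xi\rangle^{-2}\in L^2(\mathbb{R})$ as the one substantive fact, the proof closes for all $s\ge0$.
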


Applying the above lemma to \eqref{nlpart} we obtain
\begin{equation}\label{bilinearest}
\|N_2(\vec u, \vec u)\|_{X^s_T}\leq C_sT\| \vec{u} \|^2_{X^s_T}.
\end{equation}

The following lemma will be used to estimate the maximal time of existence for solutions.
\begin{lem}\label{lem_reg}
Let $s>s'>1/2$, $s'\geq s-1$ and $u,v\in H^s(\mathbb{R})$. Then
\begin{equation}\label{biforreg}
\|(1 - \partial_x^2)^{-1}\partial_x(uv)\|_{H^s}  \leq C_s \|u\|_{H^{s'}}\|v\|_{H^{s}},
\end{equation}
where $C_s>0$ is some constant depending only on $s$.
\end{lem}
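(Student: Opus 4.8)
The plan is to first strip off the operator $(1-\partial_x^2)^{-1}\partial_x$ by a symbol bound, reducing \eqref{biforreg} to a pure product estimate in Sobolev spaces. Since the Fourier multiplier of $(1-\partial_x^2)^{-1}\partial_x$ is $i\xi/(1+\xi^2)$ and $|\xi|/(1+\xi^2)=|\xi|/\langle\xi\rangle^{2}\le\langle\xi\rangle^{-1}$, one has $\langle\xi\rangle^{s}\,|\xi|/(1+\xi^2)\le\langle\xi\rangle^{s-1}$, whence
\[
\|(1-\partial_x^2)^{-1}\partial_x(uv)\|_{H^s}\le\|uv\|_{H^{s-1}}.
\]
It therefore suffices to establish the multiplication estimate $\|uv\|_{H^{s-1}}\le C_s\|u\|_{H^{s'}}\|v\|_{H^s}$, which is the asymmetric analogue of \lemref{lem_bilnear}.

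Next I would pass to the Fourier side. Using $\widehat{uv}=\hat u*\hat v$ and setting $f=\langle\cdot\rangle^{s'}|\hat u|$ and $g=\langle\cdot\rangle^{s}|\hat v|$, so that $\|f\|_{L^2}=\|u\|_{H^{s'}}$ and $\|g\|_{L^2}=\|v\|_{H^s}$, the claim becomes the $L^2$-boundedness of the bilinear form with kernel
\[
\langle\xi\rangle^{s-1}|\widehat{uv}(\xi)|\le\int K(\xi,\eta)\,f(\xi-\eta)\,g(\eta)\,d\eta,\qquad K(\xi,\eta)=\frac{\langle\xi\rangle^{s-1}}{\langle\xi-\eta\rangle^{s'}\langle\eta\rangle^{s}}.
\]
When $s\le 1$ the estimate is immediate: $\langle\xi\rangle^{s-1}\le1$ gives $K\le\langle\xi-\eta\rangle^{-s'}\langle\eta\rangle^{-s}$, so the right-hand side is dominated by the convolution $(f\langle\cdot\rangle^{-s'})*(g\langle\cdot\rangle^{-s})$, and Young's inequality together with $\langle\cdot\rangle^{-s'},\langle\cdot\rangle^{-s}\in L^2(\mathbb{R})$ (valid since $s,s'>1/2$) closes the bound.

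For $s>1$ I would split the output weight via $\langle\xi\rangle\le\langle\xi-\eta\rangle+\langle\eta\rangle$ and subadditivity, giving $\langle\xi\rangle^{s-1}\lesssim\langle\xi-\eta\rangle^{s-1}+\langle\eta\rangle^{s-1}$. This decomposes the kernel as
\[
K\lesssim\frac{\langle\xi-\eta\rangle^{(s-1)-s'}}{\langle\eta\rangle^{s}}+\frac{1}{\langle\xi-\eta\rangle^{s'}\langle\eta\rangle},
\]
according to whether $u$ or $v$ carries the high output frequency. In the first term the hypothesis $s'\ge s-1$ forces $\langle\xi-\eta\rangle^{(s-1)-s'}\le1$, so it is controlled by $f*(g\langle\cdot\rangle^{-s})$, bounded in $L^2$ by $\|f\|_{L^2}\|g\langle\cdot\rangle^{-s}\|_{L^1}\lesssim\|f\|_{L^2}\|g\|_{L^2}$ using $s>1/2$. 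In the second term $\langle\eta\rangle^{-1}$ keeps $g\langle\cdot\rangle^{-1}\in L^2$ while $s'>1/2$ puts $f\langle\cdot\rangle^{-s'}\in L^1$, so Young's inequality $L^1*L^2\to L^2$ again yields the estimate.

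The main obstacle is precisely the first term in the $s>1$ split, namely the interaction in which the factor $u$ measured in the weaker norm $H^{s'}$ carries the high output frequency: there the derivative budget is only $s'$, and controlling the growth $\langle\xi-\eta\rangle^{(s-1)-s'}$ as the frequency of $u$ increases is possible exactly because of the hypothesis $s'\ge s-1$. The remaining conditions $s,s'>1/2$ are the standard one-dimensional thresholds ensuring $\langle\cdot\rangle^{-s},\langle\cdot\rangle^{-s'}\in L^2(\mathbb{R})$, which is what makes the relevant weights integrable; once the frequency split is organized this way the verification is routine. Alternatively one could deduce \eqref{biforreg} from the Kato--Ponce or Sobolev multiplication inequalities, but the direct Fourier argument above is self-contained.
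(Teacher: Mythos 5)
Your proof is correct, and for the main case it takes a genuinely different route from the paper. Both arguments begin identically, using the symbol bound $|\xi|\langle\xi\rangle^{s-2}\le\langle\xi\rangle^{-1}$ to reduce \eqref{biforreg} to the product estimate $\|uv\|_{H^{s-1}}\le C\|u\|_{H^{s'}}\|v\|_{H^s}$, and both dispose of the low-regularity case by the trivial bound $\|uv\|_{H^{s-1}}\le\|uv\|_{L^2}\lesssim\|u\|_{H^{s'}}\|v\|_{L^2}$ (you phrase this on the Fourier side via Young's inequality, the paper via the embedding $H^{s'}\hookrightarrow L^\infty$; these are the same estimate). The divergence is in the case $s>1$ ($s\ge1$ in the paper): there the paper simply cites the Kato--Ponce result that, under $s'>1/2$ and $s'\ge s-1$, elements of $H^{s'}(\mathbb{R})$ are pointwise multipliers of $H^{s-1}(\mathbb{R})$, whereas you prove this multiplier property from scratch by splitting the output weight $\langle\xi\rangle^{s-1}\lesssim\langle\xi-\eta\rangle^{s-1}+\langle\eta\rangle^{s-1}$ and running Young's inequality on each piece, with the hypothesis $s'\ge s-1$ entering exactly where you say it does (to tame $\langle\xi-\eta\rangle^{s-1-s'}$) and $s,s'>1/2$ supplying the $L^2$-integrability of the weights. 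Your version is longer but self-contained and makes the role of each hypothesis explicit; the paper's is shorter at the cost of an external reference. The steps you use (subadditivity of $t\mapsto t^{s-1}$ for the split, Cauchy--Schwarz to put $\hat u$ in $L^1$, Young's $L^1*L^2\to L^2$) all check out.
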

\begin{proof}
First it is easy to see that
\[
\|(1 - \partial_x^2)^{-1}\partial_x(uv)\|_{H^s} \leq \|uv\|_{H^{s-1}}.
\]
When $s\geq 1$, from \cite{KP} and the assumption that $s'>1/2, s'\geq s-1$ we know that the elements of $H^{s'}(\mathbb{R})$ are multipliers in $H^{s-1}(\mathbb{R})$, that is
\[
\|uv\|_{H^{s-1}} \leq C_s \|u\|_{H^{s'}}\|v\|_{H^{s-1}},
\]
and hence we have \eqref{biforreg}.

When $s<1$,
\[
\|uv\|_{H^{s-1}}\leq \|uv\|_{L^2}\leq \|u\|_{H^{s'}}\|v\|_{L^2}\leq \|u\|_{H^{s'}}\|v\|_{H^{s}},
\]
thus we also arrive at \eqref{biforreg}.
\end{proof}

\subsection{Bejenaru-Tao's abstract well-posedness theory}\label{subsec_BeTaowell}

With the help of \eqref{linpart} and \eqref{bilinearest}, the main analytic well-posedness theorem is given as below, where we follow the idea established by Bejenaru and Tao \cite{BeTao}. For a complete statement and proofs of the abstract result please refer to \cite{BeTao}. Here we just provide an argument for the estimate of the maximal existence time.

\begin{thm}\label{thm_wellpose} (Local well-posedness in $X^0=L^2(\mathbb{R})\times L^2(\mathbb{R})$)\ 
Fix $s\geq0$, for any $\vec{u}_0(x) = (\eta_0(x), v_0(x)) \in X^s$, there exist a $T=T(\vec{u}_0)>0$ and a unique solution $\vec{u}(t,x) = (\eta(t,x), v(t,x)) \in X^s_T$ to equation \eqref{duhamel}, and hence to \eqref{bbm2} with initial value $\vec{u}_0$. The maximal existence time $T_{s} $ for the solution has the property that
\begin{equation}\label{maxtime}
T_{s} \geq {1\over 4C_s\|\vec{u}_0\|_{s}}
\end{equation}
where $C_s>0$, and
\begin{equation}\label{bu}
T_{s} < \infty \Rightarrow \limsup_{t\to T^-_{s}} \|\vec{u}\|_s = \infty.
\end{equation}
Moreover, $T_s$ is independent of $s$ if $s>1/2$.

More specifically, we can construct the iterations $A_n:\ X^s\to X^s_T$ for $n=1, 2, \ldots$ by the recursive formulae
\begin{equation}\label{iteration}
\begin{split}
A_1(\vec{u}_0) & = S(t)\vec{u}_0\\
A_n(\vec{u}_0) & = \sum_{n_1, n_2\geq1, n_1+n_2=n} N_2(A_{n_1}(\vec{u}_0), A_{n_2}(\vec{u}_0))\quad \text{for\ } n>1,
\end{split}
\end{equation}
with the property that
\begin{equation*}
\LN A_n(f) \RN_{X^s_T} \leq K^n\LN f \RN^n_{X^s}, \quad \text{for some } K>0,
\end{equation*}
and we have the absolutely convergent (in $X^s_T$) power series expansion
\begin{equation}\label{iteratesoln}
\vec{u} = \sum^\infty_{n=1} A_n(\vec{u}_0).
\end{equation}
This implies the analyticity of the solution map $\vec{u}_0\mapsto \vec{u}$ from bounded sets of $X^s$ to $X^s_T$.
\end{thm}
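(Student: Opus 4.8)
The plan is to realize the system within the abstract bilinear framework of Bejenaru and Tao, for which the two structural estimates required are already in hand: the linear propagator is unitary on $X^s$ by \eqref{linpart}, and the Duhamel nonlinearity obeys the bilinear bound \eqref{bilinearest}. These two facts verify the hypotheses of the abstract local well-posedness theorem in \cite{BeTao}, from which existence and uniqueness of $\vec u \in X^s_T$, the power series representation \eqref{iteratesoln}, and the analyticity of the data-to-solution map follow directly. I would therefore cite \cite{BeTao} for those conclusions and devote the actual argument to the quantitative existence time \eqref{maxtime}, the blow-up criterion \eqref{bu}, and the $s$-independence of $T_s$.

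For the existence-time bound I would estimate the iterates $A_n$ defined in \eqref{iteration}. Writing $a_n = \|A_n(\vec u_0)\|_{X^s_T}$, unitarity \eqref{linpart} gives $a_1 = \|\vec u_0\|_s$, while the recursion together with \eqref{bilinearest} yields $a_n \le C_s T \sum_{n_1+n_2=n} a_{n_1} a_{n_2}$. An induction then shows $a_n \le (C_s T)^{n-1}\mathrm{Cat}_{n-1}\|\vec u_0\|_s^n$, where $\mathrm{Cat}_k$ is the $k$-th Catalan number, since the Catalan recurrence exactly matches the convolution structure of the recursion; the crude bound $\mathrm{Cat}_{n-1}\le 4^{n-1}$ gives $a_n \le (4C_s T)^{n-1}\|\vec u_0\|_s^n$, which in particular yields a bound of the form $K^n\|\vec u_0\|_s^n$. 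Consequently the series \eqref{iteratesoln} converges absolutely in $X^s_T$ whenever $4C_s T\|\vec u_0\|_s<1$, and letting $T$ approach $\frac{1}{4C_s\|\vec u_0\|_s}$ produces exactly \eqref{maxtime}.

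The blow-up alternative \eqref{bu} I would obtain by the standard continuation argument: if $T_s<\infty$ yet $\limsup_{t\to T_s^-}\|\vec u\|_s=:M<\infty$, then choosing $t_0<T_s$ with $\|\vec u(t_0)\|_s$ close to $M$ and restarting the local theory at $t_0$ extends the solution by an increment bounded below by $\frac{1}{4C_s M}$, uniformly in $t_0$, which pushes the solution past $T_s$ and contradicts maximality.

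Finally, for the $s$-independence when $s>1/2$, I would fix $s'$ with $1/2<s'<s$ and $s'\ge s-1$, and run a Gronwall estimate based on the sharper bilinear inequality \eqref{biforreg} of Lemma \ref{lem_reg}. From the Duhamel formula and \eqref{biforreg} one gets $\|\vec u(t)\|_s \le \|\vec u_0\|_s + C_s\int_0^t \|\vec u(t')\|_{s'}\|\vec u(t')\|_s\,dt'$, whence $\|\vec u(t)\|_s \le \|\vec u_0\|_s \exp\bigl(C_s\int_0^t\|\vec u(t')\|_{s'}\,dt'\bigr)$, so the $H^s$ norm remains finite as long as the $H^{s'}$ norm does and hence $T_s\ge T_{s'}$; the embedding $X^s\hookrightarrow X^{s'}$ gives the reverse inequality $T_s\le T_{s'}$, so $T_s=T_{s'}$, and stepping down to any level above $1/2$ shows $T_s$ is independent of $s$ there. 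I expect this last step to be the main obstacle, since it is the only place where the crude estimate \eqref{bilinearest} is insufficient and one must invoke the multiplier refinement of Lemma \ref{lem_reg} to decouple the growth of the top norm from the lower norm, while keeping the restarting in the blow-up argument consistent with the $s'$-level Gronwall control.
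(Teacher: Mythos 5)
Your proposal is correct and follows the paper's architecture: both defer existence, uniqueness, the power-series expansion and the analyticity of the data-to-solution map to the abstract Bejenaru--Tao theorem, and both obtain the $s$-independence of $T_s$ for $s>1/2$ by exactly the same device, namely the refined bilinear estimate of Lemma \ref{lem_reg} fed into the Duhamel formula and closed with Gronwall, combined with the monotonicity $T_s\le T_{s'}$ coming from the embedding $X^s\hookrightarrow X^{s'}$. The one place you genuinely diverge is the quantitative bound \eqref{maxtime}: the paper runs a contraction-mapping argument for $F(\vec u)=S(t)\vec u_0+N_2(\vec u,\vec u)$ on the ball of radius $R=2\|\vec u_0\|_{s}$, choosing $T=1/(2C_sR)=1/(4C_s\|\vec u_0\|_s)$, whereas you sum the Picard iterates directly, proving $\|A_n(\vec u_0)\|_{X^s_T}\le (C_sT)^{n-1}\mathrm{Cat}_{n-1}\|\vec u_0\|_s^n$ by induction and using $\mathrm{Cat}_{n-1}\le 4^{n-1}$. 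Your induction is sound (the convolution $\sum_{n_1+n_2=n}\mathrm{Cat}_{n_1-1}\mathrm{Cat}_{n_2-1}=\mathrm{Cat}_{n-1}$ is precisely the Catalan recurrence) and lands on the same threshold $4C_sT\|\vec u_0\|_s<1$; it has the advantage of simultaneously establishing the stated iterate bound $\|A_n(f)\|_{X^s_T}\le K^n\|f\|^n_{X^s}$ and the absolute convergence of \eqref{iteratesoln}, which the paper's contraction argument does not exhibit explicitly, at the modest cost of having to invoke the abstract framework for uniqueness rather than extracting it directly from the contraction. Your continuation argument for \eqref{bu} and your chaining of intermediate indices $s'\in(\max(1/2,s-1),s)$ to cover all $s>1/2$ match what the paper does (and you are right to flag the constraint $s'\ge s-1$ from Lemma \ref{lem_reg}, which the paper leaves implicit).
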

\begin{proof}
We only provide an argument for the maximal time of existence here. For any initial data $\vec{u}_0\in X^s$, define a map
\begin{equation*}
F: D \equiv\{ \vec{u}\in X^s_T:\ \LN \vec{u} \RN_{X^s_T} < R = 2 \|\vec{u}_0\|_{X^s} \} \rightarrow X^s_T
\end{equation*}
as
\[
F(\vec{u}) = S(t) \vec u_0 + N_2 (\vec u, \vec u).
\]
Then from \eqref{linpart} and \eqref{bilinearest}, for any $\vec{u}, \vec{w}\in D$,
\begin{align*}
&\|F(\vec{u})\|_{X^s_T} \leq \|\vec{u}_0\|_{s} + C_s T \|\vec{u}\|^2_{X^s_T} < {R\over2} + C_sTR^2,\\
&\|F(\vec{u}) - F(\vec{w})\|_{X^s_T} \leq C_sT\|\vec{u} + \vec{w} \|_{X^s_T} \| \vec{u} - \vec{w} \|_{X^s_T} < 2RC_s \| \vec{u} - \vec{w} \|_{X^s_T}.
\end{align*}
Hence by choosing
\[
T = {1\over 2C_sR} = {1\over 4C_s\|\vec{u}_0\|_{s}}
\]
the map $F$ becomes a contraction from $D$ into itself. Therefore for each $s\geq0$, the maximal existence time $T_{s}$ satisfies
\[
T_{s} \geq {1\over 4C_s\|\vec{u}_0\|_{s}},
\]
and $T_{s}$ is finite implies that the $X^s$-norm of solution blows up to infinity as $t\to T_{s}$. Hence we have proved \eqref{maxtime} and \eqref{bu}.

It is obvious that $T_{s}$ is a non-increasing function of $s$. Let $s>1/2$. For any $1/2<s'<s$, suppose that $T_s< T_{s'}$. Therefore by uniqueness the maximal $X^s$ and $X^{s'}$ solutions coincide on $[0, T_s)$. Also from the definition of $T_s$ and $T_{s'}$, $\| \vec{u}(t) \|_{{s'}}$ remains bounded on $[0, T_s]$ while $\| \vec{u}(t) \|_{{s}}$ blows up as $t\to T_s$. From \eqref{duhamel} and Lemma \ref{lem_reg} we have
\begin{equation}\label{density}
\begin{split}
\|{\vec u}(t)\|_s & \leq \| \vec u_0 \|_s + \int^t_0 \| N\LC\vec u(t') \RC \|_s \ dt'\\
&\leq \| \vec u_0 \|_s + \int^t_0 \LN (1 - \partial_x^2)^{-1}\partial_x \begin{pmatrix} \eta v \\ {1\over2}v^2 \end{pmatrix}(t') \RN_{s} \ dt'\\
&\leq \| \vec u_0 \|_s + C_s \int^t_0 \|{\vec u}(t')\|_{s'} \|{\vec u}(t')\|_s \ dt'.
\end{split}
\end{equation}
Then by Gronwall's inequality we conclude that $\|{\vec u}(t)\|_s$ is bounded on $[0, T_s)$, which is a contradiction. Therefore, $T_s = T_{s'}$ and hence it does not depend on $s$ when $s>1/2$.
\end{proof}

\subsection{Hydrodynamic relevance.}

As is discussed in the Introduction, it is of interest from modeling point of view to consider system \eqref{scaledbbm2}.

The result of Theorem \ref{thm_wellpose} does not indicate the dependence of the temporal interval $[0, T]$ on the parameter $\varepsilon$. From \cite{AL}, the physically relevant temporal regime for \eqref{scaledbbm2} is from $O(1/\varepsilon)$ up to $O(1/\varepsilon^2)$. Hence we present here the mathematical analysis on that perspective.

In an abstract form, the above system can be rewritten as
\begin{equation*}
\vec{u}_t = A^{\varepsilon} \vec{u} + N^\varepsilon(\vec{u})
\end{equation*}
where
\begin{equation*}
A^\varepsilon = - (1 - \varepsilon\partial_x^2)^{-1} \partial_x \begin{pmatrix} 0 & 1 \\ 1 & 0 \end{pmatrix},  \quad N^\varepsilon(\vec u) = - \varepsilon (1 - \varepsilon\partial_x^2)^{-1} \partial_x  \begin{pmatrix} \eta v  \\ {1 \over 2} v^2 \end{pmatrix}. 
\end{equation*}
The associated linear semi-group $S^\varepsilon(\cdot)$ is
\begin{equation*}
\LC\widehat{S^\varepsilon(t)\vec\phi}\RC(\xi) = \begin{pmatrix}  \cos (t \xi (1+\varepsilon\xi^2)^{-1} ) & i  \sin (t \xi (1+\varepsilon\xi^2)^{-1} ) \\ i  \sin (t \xi (1+\varepsilon\xi^2)^{-1} ) & \cos (t \xi (1+\varepsilon\xi^2)^{-1} ) \end{pmatrix} \widehat {\vec \phi} (\xi)
\end{equation*}
which is still unitary on $X^s$.

The Duhamel formulation of \eqref{scaledbbm2} is
\begin{equation*}
{\vec u}(t) = S^\varepsilon(t)\vec u_0 + \int^t_0 S^\varepsilon(t - t') N^\varepsilon\LC\vec u(t') \RC\ dt'.
\end{equation*}

The bilinear estimate is
\begin{lem}\label{lem_scaled}
Let $u,v\in H^s$, $s\geq0$. Then
\begin{equation}\label{scaledbilinearest}
\|\varepsilon (1 - \varepsilon\partial_x^2)^{-1}\partial_x(uv)\|_{H^s}  \leq C_s\sqrt{\varepsilon}\|u\|_{H^s}\|v\|_{H^s},
\end{equation}
where $C_s>0$ depends only on $s$.
\end{lem}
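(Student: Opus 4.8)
The plan is to reduce the estimate to a statement about the Fourier multiplier and then borrow the bilinear bound of \lemref{lem_bilnear}. On the Fourier side the operator $\varepsilon(1-\varepsilon\partial_x^2)^{-1}\partial_x$ is multiplication by
\[
m_\varepsilon(\xi) = \frac{i\varepsilon\xi}{1+\varepsilon\xi^2},
\]
so the whole content of the lemma is the bilinear bound $\|m_\varepsilon(D)(uv)\|_{H^s}\leq C_s\sqrt\varepsilon\,\|u\|_{H^s}\|v\|_{H^s}$, where $m_\varepsilon(D)$ denotes this multiplier operator. The first thing I would record is the pointwise symbol bound coming from $1+\varepsilon\xi^2\geq 2\sqrt\varepsilon\,|\xi|$ (AM--GM),
\[
|m_\varepsilon(\xi)| = \frac{\varepsilon|\xi|}{1+\varepsilon\xi^2}\leq\frac{\sqrt\varepsilon}{2}\qquad\text{for all }\xi,
\]
with the maximum attained at the threshold frequency $|\xi|\sim\varepsilon^{-1/2}$. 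This uniform bound is exactly where the gain $\sqrt\varepsilon$ must originate.

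For $s>1/2$ this already closes the argument cleanly, and this is the route I would take first. Since $m_\varepsilon$ is a bounded multiplier, $m_\varepsilon(D)$ maps $H^s\to H^s$ with operator norm $\leq\sqrt\varepsilon/2$, and $H^s(\mathbb{R})$ is a Banach algebra for $s>1/2$, so
\[
\|m_\varepsilon(D)(uv)\|_{H^s}\leq\frac{\sqrt\varepsilon}{2}\,\|uv\|_{H^s}\leq C_s\sqrt\varepsilon\,\|u\|_{H^s}\|v\|_{H^s}.
\]
To reach the full range $s\geq 0$, the algebra property is unavailable and I would instead try to import \lemref{lem_bilnear} through the scaling structure of $m_\varepsilon$. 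Writing $p(\zeta)=\zeta/(1+\zeta^2)$, one has the exact factorization $m_\varepsilon(\xi)=\sqrt\varepsilon\, i\, p(\sqrt\varepsilon\,\xi)$, i.e. $\varepsilon(1-\varepsilon\partial_x^2)^{-1}\partial_x$ is $\sqrt\varepsilon$ times the $\sqrt\varepsilon$-dilate of the $\varepsilon=1$ operator $(1-\partial_x^2)^{-1}\partial_x$ that appears in \lemref{lem_bilnear}. Passing to the rescaled variable $y=\sqrt\varepsilon\,x$ formally turns $\varepsilon(1-\varepsilon\partial_x^2)^{-1}\partial_x(uv)$ into $\sqrt\varepsilon$ times $(1-\partial_y^2)^{-1}\partial_y(UV)$ with $U(y)=u(\sqrt\varepsilon y)$, $V(y)=v(\sqrt\varepsilon y)$, an object to which \lemref{lem_bilnear} applies directly.

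The main obstacle is precisely this last step: the inhomogeneous norm $\|\cdot\|_{H^s}$ is not invariant under the dilation $x\mapsto\sqrt\varepsilon\,x$ — its $L^2$ and homogeneous $\dot H^s$ components scale with different powers of $\varepsilon$ — so the bound of \lemref{lem_bilnear} cannot be transported back to the $x$-variable without carefully tracking these weights, and it is here that low regularity is dangerous. Concretely, I would split the output frequency at $|\xi|\sim\varepsilon^{-1/2}$: on $|\xi|\lesssim\varepsilon^{-1/2}$ the symbol behaves like $\varepsilon\xi$ and provides no smoothing, while on $|\xi|\gtrsim\varepsilon^{-1/2}$ it behaves like $1/\xi$ and reproduces the one-derivative gain that powers \lemref{lem_bilnear}. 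The delicate point is to estimate the low-frequency (high-to-low) interactions with a constant independent of $\varepsilon$ while retaining the full factor $\sqrt\varepsilon$; for $s>1/2$ this contribution is absorbed for free by the algebra property invoked above, which is exactly why the clean multiplier argument succeeds there, and pushing the uniform-in-$\varepsilon$ control down to the smallest admissible regularity is the real work.
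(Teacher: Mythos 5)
Your argument for $s>1/2$ is complete and correct, and it takes a genuinely different route from the paper: the paper never invokes the algebra property, but treats all $s\geq 0$ at once by duality and polarization, reducing \eqref{scaledbilinearest} to a trilinear bound, discarding the weights via $\langle\xi\rangle^s\leq C_s\langle\xi_1\rangle^s\langle\xi-\xi_1\rangle^s$ (valid for every $s\geq0$), and then applying Young's inequality with the symbol placed in $L^2_\xi$ rather than $L^\infty_\xi$. In other words, where you put $\varepsilon\xi/(1+\varepsilon\xi^2)$ in $L^\infty$ and pay with the algebra property of $H^s$, the paper puts it in $L^2$ and pays nothing on the regularity side. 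The genuine gap in your write-up is that for $0\leq s\leq 1/2$ you only outline a strategy (rescaling to import \lemref{lem_bilnear}, plus a frequency splitting at $|\xi|\sim\varepsilon^{-1/2}$) and explicitly defer ``the real work''; as submitted, the lemma is established only for $s>1/2$, whereas the paper needs it at $s=0$ for Theorem \ref{thm_asymp1}.

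That said, your instinct that the low-regularity case is delicate is well founded, and you should know that no completion of your splitting can recover the constant $\sqrt\varepsilon$ at $s=0$. The paper's Young-inequality step produces the constant $\LN \varepsilon\xi/(1+\varepsilon\xi^2)\RN_{L^2_\xi}$, and the substitution $\zeta=\sqrt\varepsilon\,\xi$ gives $\LN\varepsilon\xi/(1+\varepsilon\xi^2)\RN_{L^2_\xi}^2=c\,\sqrt\varepsilon$, i.e.\ a constant of order $\varepsilon^{1/4}$, not $\sqrt\varepsilon$ as asserted in the last line of the paper's proof. Moreover $\varepsilon^{1/4}$ is sharp at $s=0$: take $\hat u=\hat v=\varepsilon^{1/4}\chi_{[0,\varepsilon^{-1/2}]}$, so $\|u\|_{L^2}=\|v\|_{L^2}=1$; then $(\hat u\ast\hat v)(\xi)\geq\tfrac12$ on an interval of length $\varepsilon^{-1/2}$ centered at $\xi=\varepsilon^{-1/2}$, where the symbol is $\geq\tfrac25\sqrt\varepsilon$, whence
\[
\|\varepsilon(1-\varepsilon\partial_x^2)^{-1}\partial_x(uv)\|_{L^2}\;\geq\;\tfrac15\,\varepsilon^{1/4}.
\]
More generally this datum forces a constant no better than $\varepsilon^{1/4+s/2}$, which beats $\sqrt\varepsilon$ exactly when $s<1/2$ --- consistent with your observation that the algebra argument closes precisely at $s>1/2$. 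So the boundary you identified is real: below $s=1/2$ the stated inequality with $\sqrt\varepsilon$ fails, the paper's own argument actually yields $\varepsilon^{1/4}$ there, and the existence time in Theorem \ref{thm_asymp1} should be adjusted accordingly for rough data.
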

\begin{proof}
Using duality and a polarization argument, one may write \eqref{scaledbilinearest} in the equivalent form
\begin{equation}\label{duality}
\LV \int_{\mathbb{R}^2} {\varepsilon \xi \left< \xi \right>^s \over (1 + \varepsilon\xi^2) \left< \xi_1 \right>^s\left< \xi - \xi_1 \right>^s} \hat{u}(\xi_1) \hat{v}(\xi - \xi_1) \overline{\hat{w}(\xi)}\ d\xi d\xi_1 \RV \leq C_s \sqrt{\varepsilon} \|u\|_{L^2}\|v\|_{L^2}\|w\|_{L^2}.
\end{equation}
For $s\geq0$, ${\left< \xi \right>^s / \left< \xi_1 \right>^s\left< \xi - \xi_1 \right>^s} \leq C_s$ and hence such term in \eqref{duality} can be ignored. Let
\[
\widehat{w}_1(\xi) = {\varepsilon \xi \over 1 + \varepsilon\xi^2} \overline{\hat{w}(\xi)}.
\]
Then the left hand side of \eqref{duality} is $\left< \hat{u}\ast \hat{v}, \bar{w}_1 \right>$. Moreover, since $u(x)$ and $u_1(x) = \hat{u}(-x)$ have the same $L^2$-norm, we have
\begin{align*}
\left< \hat{u}\ast \hat{v}, \bar{w}_1 \right> & = \left< u_1\ast w_1, \bar{\hat{v}} \right> \leq \|u_1\ast w_1\|_{L^2} \|v\|_{L^2}\\
& \leq \|u\|_{L^2} \|w_1\|_{L^1} \|v\|_{L^2}\\
& \leq \LN {\varepsilon \xi \over 1 + \varepsilon\xi^2} \RN_{L^2} \|u\|_{L^2}\|v\|_{L^2}\|w\|_{L^2} \lesssim \sqrt{\varepsilon} \|u\|_{L^2}\|v\|_{L^2}\|w\|_{L^2},
\end{align*}
and hence proves the lemma.
\end{proof}

From the above lemma we have for $s\geq0$ that
\begin{equation*}
\LN \int^t_0 S^\varepsilon(t - t') N^\varepsilon\LC\vec u(t') \RC\ dt' \RN_{X^s_T} \leq C_s \sqrt{\varepsilon} T \|\vec{u}\|^2_{X^s_T}.
\end{equation*}
Consequently we obtain the existence of solutions on the time scale of order $O(1/\sqrt{\varepsilon})$.
\begin{thm}\label{thm_asymp1}
Fix $s\geq0$, for any $\vec{u}_0(x) = (\eta_0(x), v_0(x)) \in X^s$, there exist a $T=T(\vec{u}_0)>0$ and a unique solution $\vec{u}(t,x) = (\eta(t,x), v(t,x)) \in X^s_T$ to \eqref{scaledbbm2} with initial data $\vec{u}_0$. The maximal existence time $T = T_s$ for the solution has the property that
\begin{equation}\label{approxmaxtime}
T_s \geq {1\over 4\sqrt{\varepsilon}C_s\|\vec{u}_0\|_{s}}
\end{equation}
where $C_s>0$ depends only on $s$. When $s>1/2$ the maximal time $T_s$ is independent of $s$. Moreover, the solution map $\vec{u}_0 \mapsto \vec{u}$ is analytic from bounded sets of $X^s$ to $X^s_T$.
\end{thm}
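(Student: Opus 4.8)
The plan is to run exactly the same fixed-point scheme as in the proof of Theorem \ref{thm_wellpose}, but with the scaled bilinear estimate of Lemma \ref{lem_scaled} replacing \eqref{bilinearest}. The structural point is that every occurrence of $C_s$ in the unscaled argument is now accompanied by a factor $\sqrt{\varepsilon}$, and tracking this factor through the contraction is precisely what pushes the existence time up to order $O(1/\sqrt{\varepsilon})$.

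First I would record the nonlinear bound in the scaled Duhamel formulation. Writing
\[
F^{\varepsilon}(\vec u) = S^{\varepsilon}(t)\vec u_0 + N_2^{\varepsilon}(\vec u,\vec u),
\]
where $N_2^{\varepsilon}$ is the bilinear operator obtained from the scaled Duhamel integral (the analogue of \eqref{defnN2} with $S$ replaced by $S^{\varepsilon}$ and $N$ by $N^{\varepsilon}$), the unitarity of $S^{\varepsilon}$ on $X^s$ together with Lemma \ref{lem_scaled} applied to each component of $N^{\varepsilon}(\vec u)$ yields
\[
\|N_2^{\varepsilon}(\vec u,\vec v)\|_{X^s_T} \le C_s\sqrt{\varepsilon}\,T\,\|\vec u\|_{X^s_T}\|\vec v\|_{X^s_T}.
\]
On the ball $D=\{\vec u\in X^s_T:\ \|\vec u\|_{X^s_T}<R=2\|\vec u_0\|_s\}$ this gives the self-map and Lipschitz estimates $\|F^{\varepsilon}(\vec u)\|_{X^s_T}<R/2+C_s\sqrt{\varepsilon}\,TR^2$ and $\|F^{\varepsilon}(\vec u)-F^{\varepsilon}(\vec w)\|_{X^s_T}<2RC_s\sqrt{\varepsilon}\,T\|\vec u-\vec w\|_{X^s_T}$, so the choice
\[
T=\frac{1}{2C_s\sqrt{\varepsilon}R}=\frac{1}{4C_s\sqrt{\varepsilon}\|\vec u_0\|_s}
\]
turns $F^{\varepsilon}$ into a contraction of $D$ into itself. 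This delivers existence, uniqueness, and the lower bound \eqref{approxmaxtime} verbatim.

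For the claim that $T_s$ is independent of $s$ when $s>1/2$, I would repeat the Gronwall argument in Theorem \ref{thm_wellpose}. The only input needed is a scaled analogue of Lemma \ref{lem_reg}, which holds with a constant uniform in $\varepsilon\le1$: since the symbol of the scaled operator satisfies $\varepsilon|\xi|/(1+\varepsilon\xi^2)\lesssim\langle\xi\rangle^{-1}$ (checking $|\xi|\le1$ and $|\xi|\ge1$ separately), the map $\varepsilon(1-\varepsilon\partial_x^2)^{-1}\partial_x$ is bounded from $H^{s-1}$ to $H^s$ uniformly in $\varepsilon$, and the multiplier estimate of \cite{KP} then gives $\|N^{\varepsilon}(\vec u)\|_s\le C_s\|\vec u\|_{s'}\|\vec u\|_s$ for $s>s'>1/2$, $s'\ge s-1$. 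Feeding this into the scaled Duhamel identity and invoking Gronwall shows that a finite $T_{s'}$ forces $\|\vec u(t)\|_s$ to stay bounded on $[0,T_{s'})$, contradicting $T_s<T_{s'}$. Finally, analyticity of the solution map is inherited directly from the Bejenaru--Tao framework of Subsection \ref{subsec_BeTaowell}: the iterates built from $N_2^{\varepsilon}$ obey the geometric bound $\|A_n^{\varepsilon}(f)\|_{X^s_T}\le K^n\|f\|_{X^s}^n$ with $K\sim\sqrt{\varepsilon}\,C_sT$, so the power series \eqref{iteratesoln} converges absolutely on bounded sets.

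The main obstacle here is not analytic but organizational: all the genuine work is already contained in Lemma \ref{lem_scaled}, so the task reduces to threading the single factor $\sqrt{\varepsilon}$ consistently through the contraction so that it lands in the denominator of \eqref{approxmaxtime}, and to confirming that the regularity multiplier $\varepsilon\xi/(1+\varepsilon\xi^2)$ remains an $O(\langle\xi\rangle^{-1})$ symbol uniformly as $\varepsilon\to0$, so that the $s$-independence argument does not secretly degrade as $\varepsilon$ shrinks.
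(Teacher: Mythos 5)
Your proposal is correct and follows essentially the same route as the paper: the paper's entire argument for Theorem \ref{thm_asymp1} consists of Lemma \ref{lem_scaled}, the resulting bound $\|\int_0^t S^\varepsilon(t-t')N^\varepsilon(\vec u(t'))\,dt'\|_{X^s_T}\le C_s\sqrt{\varepsilon}\,T\|\vec u\|_{X^s_T}^2$, and an appeal to the same contraction/Bejenaru--Tao scheme used for Theorem \ref{thm_wellpose}, which is exactly what you do. Your added verification that the scaled symbol $\varepsilon|\xi|/(1+\varepsilon\xi^2)$ is $O(\langle\xi\rangle^{-1})$ uniformly in $\varepsilon\le1$ (so that the $s$-independence argument via Lemma \ref{lem_reg} survives the scaling) is a detail the paper leaves implicit, and it checks out.
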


\vspace{.1in}

\section{Ill-posedness}\label{sec_ill}

The ill-posedness result we discuss here can be viewed as an application of a general result
proved in \cite{BeTao}. Roughly speaking, this general ill-posedness result requires the
two following ingredients:
\begin{enumerate}
\item The equation is analytically well-posed until a certain critical index $s_c$ and the corresponding solution-map is also analytic in the space of solutions.
\item Below this critical index, one iteration of the Picard scheme is not continuous. The discontinuity may be driven by high-to-low frequency interactions that blow up in frequencies of order smaller or equal to one.
\end{enumerate}

The first ingredient is given by Theorem \ref{thm_wellpose} whereas the second one can be derived using the abstract ill-posedness theory in \cite{BeTao} (c.f. Proposion 1 in \cite{BeTao}), which says that if the map $\vec{u}_0 \mapsto \sum^\infty_{n=1} A_n(\vec{u}_0)$ is continuous in a coarse topology, then each component $\vec{u}_0 \mapsto A_n(\vec{u}_0)$ of the series is also continuous in this coarse topology. Therefore, we may reduce the discontinuity of the flow map to disproving a bilinear estimate, namely, we will show that the second iteration $A_2(\vec{u}_0)$ is not continuous.

Our ill-posedness result is stated below.

\begin{thm}\label{thm_illpose} (Ill-posedness below $X^0$)\
For any $s<0$, let $R>0$ be arbitrary. Denote $B_R$ the ball
\begin{equation*}
B_R = \{ \vec{u}_0\in X^s:\ \|\vec{u}_0\|_{s} <R \}.
\end{equation*}
Let $T$ and $\vec{u}_0\mapsto \vec{u}$ be as in Theorem \ref{thm_wellpose}. Then the solution map $\vec{u}_0\mapsto \vec{u}$ is discontinuous from $B_R$ (with the $X^s$ topology) to $X^{0}_T$ (with the $X^{s'}_T$ topology) for any $s'\in \mathbb{R}$.
\end{thm}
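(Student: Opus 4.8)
The plan is to follow the Bejenaru--Tao scheme and reduce the discontinuity of the full solution map to the discontinuity of a single Picard iterate. By the abstract result of \cite{BeTao} (Proposition~1 there), if the solution map $\vec u_0\mapsto\vec u = \sum_{n\ge1}A_n(\vec u_0)$ were continuous from $B_R$ with the $X^s$ topology into $X^0_T$ with the $X^{s'}_T$ topology, then every homogeneous component $A_n$ would be continuous in the same topologies, since $A_n$ is extracted from the convergent series \eqref{iteratesoln} by the rescaling $\vec u_0\mapsto\lambda\vec u_0$ and reading off the coefficient of $\lambda^n$. Thus it suffices to exhibit a sequence $\vec u_0^{(N)}\to 0$ in $X^s$ for which $A_2(\vec u_0^{(N)}) = N_2(S(t)\vec u_0^{(N)}, S(t)\vec u_0^{(N)})$ stays bounded away from $0$ in $X^{s'}_T$; this disproves continuity of $A_2$ at the origin and hence of the solution map. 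Since the data below will have small $X^0$ norm, the corresponding solutions do exist on a common interval $[0,T]$ by Theorem~\ref{thm_wellpose}, so the map is genuinely defined.

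For the construction I would write $\omega(\xi) = \xi\langle\xi\rangle^{-2}$ for the dispersion symbol appearing in \eqref{linop} and choose data concentrated at a single high frequency $N$ and placed entirely in the $w_+ = \eta + v$ eigenmode of the linear propagator. Concretely, set $\eta_0^{(N)} = v_0^{(N)} = \phi_N$ with $\widehat{\phi_N} = \gamma\big(\mathbf{1}_{[N,N+1]} + \mathbf{1}_{[-N-1,-N]}\big)$ for a small fixed constant $\gamma$. Because the matrix in \eqref{linop} acts on $(\hat\eta_0,\hat v_0) = (\widehat{\phi_N},\widehat{\phi_N})$ as multiplication by the single scalar $e^{i\omega(\xi)t}$, both components of $S(t)\vec u_0^{(N)}$ evolve with the \emph{same} phase; this is precisely what ``minimizes the surface--velocity interactions'' and collapses the vector problem to a scalar BBM-type computation. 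A direct computation gives $\|\vec u_0^{(N)}\|_s\sim\gamma N^s\to0$ as $N\to\infty$ (using $s<0$), while $\|\vec u_0^{(N)}\|_0\sim\gamma$ stays small.

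The heart of the matter is to isolate and lower-bound the low-frequency output of $A_2$. Expanding $N_2$ from \eqref{defnN2} in Fourier variables turns the nonlinearity into a convolution over $\xi_1 + (\xi-\xi_1) = \xi$, and I would keep only the \emph{high-to-low} interaction with $\xi_1\approx N$, $\xi-\xi_1\approx-N$ (together with its mirror image), which feeds the output frequencies $|\xi|\le c$ for a small constant $c$. On this region three things happen simultaneously: the self-convolution $\widehat{\phi_N}\ast\widehat{\phi_N}$ is a triangular bump of height $\sim\gamma^2$; the BBM multiplier $i\xi(1+\xi^2)^{-1}$ is comparable to $i\xi$; and, crucially, the phase $\Phi$ governing the Duhamel time integral, of the form $\pm\omega(\xi) + \omega(\xi_1) + \omega(\xi-\xi_1)$, is small, because $\omega(N)+\omega(-N) = O(N^{-2})$ cancels and $|\omega(\xi)|\le c$. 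Choosing $cT$ small makes $\int_0^t e^{i\Phi t'}\,dt'\approx \tfrac12 t$ with no oscillatory cancellation between the phase branches of the outer propagator, whence $|\widehat{A_2}(t,\xi)|\gtrsim |\xi|\,t\,\gamma^2$ on $|\xi|\le c$. Taking $X^{s'}$ norms (where $\langle\xi\rangle^{s'}\sim1$ at low frequency) yields $\|A_2(\vec u_0^{(N)})\|_{X^{s'}_T}\gtrsim T\gamma^2 c^{3/2}$, a positive bound independent of $N$. Combined with $\|\vec u_0^{(N)}\|_s\to0$, this establishes the discontinuity of $A_2$ at the origin and completes the proof.

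The main obstacle is not the single resonant term but the bookkeeping of the \emph{many} competing contributions that the vector nonlinearity $(\eta v,\tfrac12 v^2)$ produces in the second iterate: besides the resonant high-to-low piece there are high-frequency outputs near $\pm2N$ and a family of non-resonant terms, and the propagator \eqref{linop} carries several phase branches $e^{\pm i\omega t}$ that could conceivably cancel the resonant output. The choice $\eta_0 = v_0$ is what tames this --- by projecting the data onto one eigenmode it removes the worst coupling and guarantees that the surviving branches of the outer propagator combine with a nonzero numerical coefficient (here $\tfrac12$), so no cancellation occurs. Verifying rigorously that this low-frequency resonant term genuinely survives in the full matrix computation, and making the near-resonance estimate $\int_0^t e^{i\Phi t'}\,dt'\approx\tfrac12 t$ quantitative and uniform over the output window, is the delicate part of the argument.
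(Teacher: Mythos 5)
Your proposal is correct and follows the same architecture as the paper: the Bejenaru--Tao reduction to the second Picard iterate, data supported on a frequency annulus at $|\xi|\sim N$, and a lower bound on the high-to-low output at $|\xi|\lesssim 1$ exploiting the fact that the phase $\lambda(\xi)=\xi\left<\xi\right>^{-2}$ is $O(1/N)$ on the data and $O(1)$ on the output, so no oscillatory cancellation can occur in the Duhamel integral. The one genuine difference is the choice of data. The paper takes $\widehat{\eta}_0=0$ and $\widehat{v}_0=N^{-s}\chi_{\{N-1/2\le|\xi|\le N+1/2\}}$, normalized so that $\|\vec u_0\|_{s}\sim 1$, and shows $\|A_2(\vec u_0)\|_{X^{s'}_T}\gtrsim N^{-s}\to\infty$; the point of killing $\eta_0$ is that the $\tfrac12 v^2$ nonlinearity then produces in $\widehat{Q}_2$ the single term $\tfrac12\widehat{L}_1(\xi,t-t')\widehat{L}_1(\xi_1,t')\widehat{L}_1(\xi-\xi_1,t')$ consisting entirely of cosines, which is manifestly bounded below, while every competing term carries at least one small sine factor (this is \propref{lowerboundprop}). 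You instead take $\eta_0=v_0=\phi_N$ of fixed small amplitude, which diagonalizes the propagator ($S(t)$ acts as $e^{i\lambda(\xi)t}$ on the $(1,1)$ eigenmode) and collapses the computation to a scalar BBM-type one; the cost is that the outer multiplier in $\widehat{Q}_2$ becomes the complex quantity $\widehat{L}_2(\xi,t-t')+\tfrac12\widehat{L}_1(\xi,t-t')$, and one must observe that its real part, namely $\tfrac12\cos(\lambda(\xi)(t-t'))\ge\tfrac12\cos\tfrac12$, cannot be cancelled by the purely imaginary $\widehat{L}_2$ contribution --- exactly the point you flag as delicate, and it does check out. The two normalizations (data of size one with output exploding like $N^{-s}$, versus data tending to zero in $X^s$ with output bounded below) are equivalent by homogeneity of the bilinear map $A_2$, so either route disproves the bound $\| A_2(\vec{u}_0) \|_{X^{s'}_T} \lesssim \| \vec{u}_0 \|^2_{s}$ and hence proves the theorem.
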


\begin{rem}
Note that for the one-component analogue of \eqref{bbm2}, c.f. the BBM equation \cite{BBM}, it was shown by Bona-Tzvetkov \cite{BoTz} that the associated Cauchy problem is $C^2$ ill-posed below $L^2(\mathbb{R})$ by proving that the second Picard iteration fails to be continuous. In fact their problem fits in the framework of Bejenaru-Tao, and hence one can infer that the BBM equation is $C^0$ ill-posed below $L^2$. Our result here suggests the same threshold for the system \eqref{bbm2}, but the frequency interactions here are more delicate.
\end{rem}

\subsection{Second iteration $A_2(\vec{u}_0)$}

By definition \eqref{iteration}, we know
\begin{equation*}
A_2(\vec{u}_0) = N_2(S(t)\vec{u}_0, S(t)\vec{u}_0) = \int^t_0 S(t-t') N(S(t')\vec{u}_0)\ dt'.
\end{equation*}

Our goal is to construct initial data $\vec{u}_0$ that violates
\begin{equation}\label{continuitybound}
\| A_2(\vec{u}_0) \|_{X^{s'}_T} \lesssim \| \vec{u}_0 \|^2_{s} \quad \text{ for  } s<0.
\end{equation}

For simplification of notation, we denote
\begin{align*}
S(t) \begin{pmatrix}
\eta \\
v
\end{pmatrix}  =  \begin{pmatrix} {L}_1 \eta  +  {L}_2  v \\ {L}_2  \eta + {L}_1  v \end{pmatrix} ,
\end{align*}
where
\begin{equation}\label{phase}
\begin{split}
\widehat{L}_1(\xi, t)  &  = \cos(\lambda (\xi) t)  \\
\widehat{L}_2(\xi, t) & = i\sin(\lambda( \xi)t),
\end{split}
\end{equation}
with $\lambda(\xi) = \xi \left< \xi \right>^{-2}$. For an initial datum $\vec{u}_0 = \begin{pmatrix} \eta_0 \\ v_0 \end{pmatrix}$, let $\begin{pmatrix} \eta_1 \\ v_1 \end{pmatrix}(t) = S(t)\begin{pmatrix} \eta_0 \\ v_0 \end{pmatrix}$. Then
\begin{equation*}
A_2(\vec{u}_0) = \int_0^t S(t-t') (1-\partial_x^2)^{-1} \partial_x \begin{pmatrix} \eta_1 v_1 \\ {1\over2} v_1^2 \end{pmatrix}(t')\ dt' \equiv \int^t_0 \begin{pmatrix} Q_1 \\ Q_2 \end{pmatrix}\ dt'.
\end{equation*}
Taking the Fourier Transform of the above second order iterations yields the following set of terms
\begin{align*}
\widehat{Q}_1 & = {i\xi \over 1 + \xi^2} \LB \widehat{L}_1 (\xi,t-t') \widehat{\eta_1 v_1}(\xi, t') + \widehat{L}_2 (\xi,t-t') \widehat{{1\over2} v^2_1}(\xi, t') \RB\\
& = {i\xi \over 1 + \xi^2} \int_{\mathbb{R}} \widehat{L}_1 (\xi,t-t') \LC \widehat{L}_1 (\xi_1,t') \widehat{\eta}_0(\xi_1) + \widehat{L}_2(\xi_1, t') \widehat{v}_0(\xi_1) \RC \\
& \quad \quad \quad \quad \times \LC \widehat{L}_2(\xi - \xi_1, t')\widehat{\eta}_0(\xi-\xi_1) + \widehat{L}_1(\xi-\xi_1, t') \widehat{v}_0(\xi-\xi_1) \RC d\xi_1 + \\
& \quad \ \ {i\xi \over 2 (1 + \xi^2)} \int_{\mathbb{R}} \widehat{L}_2 (\xi,t-t') \LC \widehat{L}_2 (\xi_1,t') \widehat{\eta}_0(\xi_1) + \widehat{L}_1(\xi_1, t') \widehat{v}_0(\xi_1) \RC \\
& \quad \quad \quad \quad \times \LC \widehat{L}_2(\xi - \xi_1, t')\widehat{\eta}_0(\xi-\xi_1) + \widehat{L}_1(\xi-\xi_1, t') \widehat{v}_0(\xi-\xi_1) \RC d\xi_1,
\end{align*}
\begin{align*}
\widehat{Q}_2 & = {i\xi \over 1 + \xi^2} \LB \widehat{L}_2 (\xi,t-t') \widehat{\eta_1 v_1}(\xi, t') + \widehat{L}_1 (\xi,t-t') \widehat{{1\over2} v^2_1}(\xi, t') \RB \\
& = {i\xi \over 1 + \xi^2} \int_{\mathbb{R}} \widehat{L}_2 (\xi,t-t') \LC \widehat{L}_1 (\xi_1,t') \widehat{\eta}_0(\xi_1) + \widehat{L}_2(\xi_1, t') \widehat{v}_0(\xi_1) \RC \\
& \quad \quad \quad \quad \times \LC \widehat{L}_2(\xi - \xi_1, t')\widehat{\eta}_0(\xi-\xi_1) + \widehat{L}_1(\xi-\xi_1, t') \widehat{v}_0(\xi-\xi_1) \RC d\xi_1 + \\
& \quad \ \ {i\xi \over 2 (1 + \xi^2)} \int_{\mathbb{R}} \widehat{L}_1 (\xi,t-t') \LC \widehat{L}_2 (\xi_1,t') \widehat{\eta}_0(\xi_1) + \widehat{L}_1(\xi_1, t') \widehat{v}_0(\xi_1) \RC \\
& \quad \quad \quad \quad \times \LC \widehat{L}_2(\xi - \xi_1, t')\widehat{\eta}_0(\xi-\xi_1) + \widehat{L}_1(\xi-\xi_1, t') \widehat{v}_0(\xi-\xi_1) \RC d\xi_1.
\end{align*}
Of the above 16 terms in the two Duhamel operators $\widehat{Q}_1$ and $\widehat{Q}_2$, we look for a dominant lower bound term that violates an assumed upper bound in \eqref{continuitybound}.   This lower bound will arise from the following construction.

\subsection{Choice of data}

For a large $N$ we choose initial data $\vec{u}_0 = (\eta_0, v_0)$ so that
\begin{equation}\label{illposeddata}  \begin{split}
\widehat{\eta}_0 & = 0 , \\
\widehat{v}_0 &= N^{-s} {\chi}_{\{N-{1\over2}  \leq |\xi | \leq N+{1\over 2}\}},
\end{split} \end{equation}
where $\chi$ is the characteristic function. In this way we eliminate all interactions in $A_2(\vec{u}_0)$ involving $\eta_0$. Then our expressions for $\widehat{Q}_1$ and $\widehat{Q}_2$ reduce to  the following:
\begin{align*}
\widehat{Q}_1 & = {i\xi \over 1 + \xi^2}  \int_{\mathbb{R}} \LB \widehat{L}_1 (\xi,t-t') \widehat{L}_2(\xi_1, t')\widehat{L}_1(\xi-\xi_1, t') + {1\over2} \widehat{L}_2 (\xi,t-t')\widehat{L}_1(\xi_1, t')\widehat{L}_1(\xi-\xi_1, t')   \RB \\
& \quad \quad \quad \quad \quad \times \widehat{v}_0(\xi_1)\widehat{v}_0(\xi-\xi_1)\ d\xi_1,
\end{align*}
\begin{align*}
\widehat{Q}_2 & = {i\xi \over 1 + \xi^2}  \int_{\mathbb{R}} \LB \widehat{L}_2 (\xi,t-t') \widehat{L}_2(\xi_1, t')\widehat{L}_1(\xi-\xi_1, t') + {1\over2} \widehat{L}_1 (\xi,t-t')\widehat{L}_1(\xi_1, t')\widehat{L}_1(\xi-\xi_1, t')   \RB \\
& \quad \quad \quad \quad \quad \times \widehat{v}_0(\xi_1)\widehat{v}_0(\xi-\xi_1)\ d\xi_1.
\end{align*}

\begin{prop} \label{lowerboundprop}
If $N$ is sufficiently large, then on the support of $\widehat{v}_0(\xi_1)\widehat{v}_0(\xi-\xi_1)$ and for $0 \leq t' \leq t \leq  1$ we have the following bound:
\begin{equation*}
\widehat{L}_2 (\xi,t-t') \widehat{L}_2(\xi_1, t')\widehat{L}_1(\xi-\xi_1, t') + {1\over2} \widehat{L}_1 (\xi,t-t')\widehat{L}_1(\xi_1, t')\widehat{L}_1(\xi-\xi_1, t')  \geq {1\over32}.
\end{equation*}
\end{prop}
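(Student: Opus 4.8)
The plan is to insert the explicit phases from \eqref{phase} and reduce the claim to a one-line trigonometric inequality. Setting
\[
\alpha=\lambda(\xi)(t-t'),\qquad \beta=\lambda(\xi_1)\,t',\qquad \gamma=\lambda(\xi-\xi_1)\,t',
\]
and recalling $\widehat L_1(\cdot,\tau)=\cos(\lambda(\cdot)\tau)$, $\widehat L_2(\cdot,\tau)=i\sin(\lambda(\cdot)\tau)$, I would first rewrite the left-hand side, using $i^2=-1$, as
\[
-\sin\alpha\,\sin\beta\,\cos\gamma+\tfrac12\cos\alpha\,\cos\beta\,\cos\gamma .
\]
The guiding idea is that on the chosen support this expression is positive and close to $\tfrac12\cos\alpha$, so the lower bound $\tfrac1{32}$ is very generous.

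Next I would exploit the high-to-low structure of the interaction. On the support of $\widehat v_0(\xi_1)\widehat v_0(\xi-\xi_1)$ both $|\xi_1|$ and $|\xi-\xi_1|$ lie in $[N-\tfrac12,N+\tfrac12]$, so
\[
|\lambda(\xi_1)|=\frac{|\xi_1|}{1+\xi_1^2}\le\frac{1}{|\xi_1|}\le (N-\tfrac12)^{-1},
\]
and the same for $\lambda(\xi-\xi_1)$; since $0\le t'\le1$ this yields $|\beta|,|\gamma|\le (N-\tfrac12)^{-1}\to0$. By contrast, the phase attached to the output frequency is uniformly controlled: an elementary calculation gives $|\lambda(\xi)|=|\xi|/(1+\xi^2)\le\tfrac12$ for all $\xi$ (maximum at $\xi=\pm1$), so $|\alpha|\le\tfrac12<\pi/2$ and $\cos\alpha\ge\cos\tfrac12>0$ uniformly, whether the output $\xi$ sits near $0$ or near $\pm2N$.

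Finally I would assemble the estimate: the main term obeys $\tfrac12\cos\alpha\cos\beta\cos\gamma\ge\tfrac12\cos\tfrac12\,\cos^2\big((N-\tfrac12)^{-1}\big)$, while the other term is bounded by $|\sin\alpha\,\sin\beta\,\cos\gamma|\le|\sin\beta|\le (N-\tfrac12)^{-1}$, so the whole quantity is at least
\[
\tfrac12\cos\tfrac12\,\cos^2\big((N-\tfrac12)^{-1}\big)-(N-\tfrac12)^{-1},
\]
which tends to $\tfrac12\cos\tfrac12\approx0.44$ and hence exceeds $\tfrac1{32}$ once $N$ is large. The only delicate point, rather than a serious obstacle, is the sign bookkeeping: one must keep track of the $-1$ coming from $i^2$ and verify that $\cos\alpha$ stays bounded away from zero (via $|\lambda|\le\tfrac12$) so that the positive second summand is never overtaken by the first; the rest is a routine $N\to\infty$ smallness argument driven by $|\lambda(\xi_1)|,|\lambda(\xi-\xi_1)|\lesssim N^{-1}$.
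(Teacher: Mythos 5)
Your proof is correct and follows essentially the same route as the paper: isolate the all-cosine term $\tfrac12\cos\alpha\cos\beta\cos\gamma$ as the dominant positive contribution and show the term containing $\sin(\lambda(\xi_1)t')$ is $O(1/N)$ because $|\lambda(\xi_1)|,|\lambda(\xi-\xi_1)|\lesssim N^{-1}$ on the support of the data. The only difference is a small streamlining: by invoking the global bound $|\lambda(\xi)|\le \tfrac12$ you handle the output frequency uniformly, whereas the paper splits into the cases $|\xi|\le 1$ and $2N-1\le|\xi|\le 2N+1$; either way the limiting lower bound comfortably exceeds $\tfrac1{32}$.
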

\begin{proof}
Since $N$ is large we know that on the support of $\widehat{v}_0(\xi_1)\widehat{v}_0(\xi-\xi_1)$
\begin{equation*}
{1\over 2N} \leq |\lambda(\xi_1)|, |\lambda(\xi-\xi_1)| \leq {1\over N}.
\end{equation*}
So when $0 \leq t' \leq t \leq  1$,
\begin{align*}
1 \geq  \widehat{L}_1(\xi_1, t'), \widehat{L}_1(\xi-\xi_1, t')  \geq {1\over2}.
\end{align*}
As for $\widehat{L}_2$, we know that
\begin{align*}
\LV \widehat{L}_2(\xi_1, t') \RV = \LV \sin( \lambda(\xi_1) t') \RV \leq \LV \sin\LC {t'\over N} \RC \RV\leq {t'\over N} \leq {1\over N}.
\end{align*}
Hence similarly
\begin{align*}
\LV \widehat{L}_2(\xi-\xi_1, t') \RV \leq {1\over N}.
\end{align*}
Because on the support of $\widehat{v}_0(\xi_1)\widehat{v}_0(\xi-\xi_1)$, $N-{1\over2} \leq |\xi_1|, |\xi - \xi_1| \leq N+{1\over2}$, we deduce that $|\xi|\leq1$ or $2N-1 \leq |\xi| \leq 2N+1$.

If $|\xi|\leq 1$, then
\[
\LV \lambda(\xi) \RV \leq |\lambda(1)| \leq {1\over2}.
\]
So
\begin{align*}
\widehat{L}_1(\xi, t - t') & = \cos \LC \lambda(\xi) (t - t') \RC \geq \cos\LC {1\over2} \RC \geq {1\over2},\\
\LV \widehat{L}_2(\xi, t - t') \RV&  = \LV \sin \LC \lambda(\xi) (t - t') \RC \RV \leq \sin\LC {1\over2} \RC \leq {1\over2}.
\end{align*}
Therefore we have
\begin{align*}
\LV \widehat{L}_2 (\xi,t-t') \widehat{L}_2(\xi_1, t')\widehat{L}_1(\xi-\xi_1, t') \RV & \leq {1\over 2} \cdot {1\over N} \cdot 1 = {1\over 2N}, \\
{1\over2} \widehat{L}_1 (\xi,t-t')\widehat{L}_1(\xi_1, t')\widehat{L}_1(\xi-\xi_1, t') & \geq {1\over 2} \cdot {1\over 2} \cdot {1\over 2} \cdot {1\over 2} = {1\over 16}.
\end{align*}
Hence by choosing $N$ sufficiently large, say, $N \geq 16$ we obtain the desired bound.

If $2N-1 \leq |\xi| \leq 2N+1$, then $\LV \lambda(\xi) \RV \leq {1\over N}$ and hence
\begin{align*}
\widehat{L}_1(\xi, t - t') \geq {1\over2}, \quad \LV \widehat{L}_2(\xi, t - t') \RV \leq {1\over N}.
\end{align*}
Therefore
\begin{align*}
\widehat{L}_2 (\xi,t-t') & \widehat{L}_2(\xi_1, t')\widehat{L}_1(\xi-\xi_1, t') + {1\over2} \widehat{L}_1 (\xi,t-t')\widehat{L}_1(\xi_1, t')\widehat{L}_1(\xi-\xi_1, t')\\
& \geq {1\over 16} - {1\over 2N^2} \geq {1\over 32}
\end{align*}
for $N\geq 4$.
\end{proof}
\begin{rem}
Here we only estimate the symbols in $\widehat{Q}_2$ because we are seeking a lower bound of $\| A_2(\vec{u}_0) \|_{X^{s'}_T}$. From the definition of the symbols $\widehat{L}_1$ and $\widehat{L}_2$ we see that a lower bound can be obtained only when there is a multiplier consisting of all $\widehat{L}_1$-symbols. This is why we assign $\widehat{\eta}_0=0$, since if $\widehat{v}_0=0$ then the corresponding $\widehat{Q}_1$ and $\widehat{Q}_2$ are
\begin{align*}
\widehat{Q}_1 & = {i\xi \over 1 + \xi^2}  \int_{\mathbb{R}} \LB \widehat{L}_1 (\xi,t-t') \widehat{L}_1(\xi_1, t')\widehat{L}_2(\xi-\xi_1, t') + {1\over2} \widehat{L}_2 (\xi,t-t')\widehat{L}_2(\xi_1, t')\widehat{L}_2(\xi-\xi_1, t')   \RB \\
& \quad \quad \quad \quad \quad \times \widehat{\eta}_0(\xi_1)\widehat{\eta}_0(\xi-\xi_1)\ d\xi_1,
\end{align*}
\begin{align*}
\widehat{Q}_2 & = {i\xi \over 1 + \xi^2}  \int_{\mathbb{R}} \LB \widehat{L}_2 (\xi,t-t') \widehat{L}_1(\xi_1, t')\widehat{L}_2(\xi-\xi_1, t') + {1\over2} \widehat{L}_1 (\xi,t-t')\widehat{L}_2(\xi_1, t')\widehat{L}_2(\xi-\xi_1, t')   \RB \\
& \quad \quad \quad \quad \quad \times \widehat{\eta}_0(\xi_1)\widehat{\eta}_0(\xi-\xi_1)\ d\xi_1,
\end{align*}
and all the multipliers contain the symbol $\widehat{L}_2$.
\end{rem}

\subsection{Ill-posedness}

Choosing initial data as in \eqref{illposeddata}, we have
\begin{equation}\label{initialnorm}
\|\vec{u}_0\|_{s} = \LN \begin{pmatrix} \eta_0 \\ v_0 \end{pmatrix} \RN_{s} = \LN v_0 \RN_{H^s} = \LN \LA \xi \RA^s \widehat{v}_0 \RN_{L^2} \sim 1.
\end{equation}

\begin{proof}[Proof of Theorem \ref{thm_illpose}]
Fix $s<0$ and $s'$. Suppose for contradiction that the corresponding solution map is continuous. Then from Proposition 1 in \cite{BeTao}, we have \eqref{continuitybound}.

From \eqref{maxtime} and \eqref{initialnorm} we may rescale $T$ to be 1. Consider a low-frequency region $I = \{ |\xi|\leq 1 \}$, and therefore $ \LA \xi \RA^{s'} \sim 1$. It then follows from Proposition \ref{lowerboundprop} that
\begin{align*}
\| A_2(\vec{u}_0) \|_{X^{s'}_T} & = \sup_{0\leq t \leq T} \LN \LA \xi \RA^{s'} \int^t_0 \begin{pmatrix} Q_1 \\ Q_2 \end{pmatrix} \ dt' \RN_{L_{\xi}^2\times L_{\xi}^2} \\
& =  \sup_{0\leq t \leq T} \LN \LA \xi \RA^{s'} \int^t_0 Q_1 dt' \RN_{L_{\xi}^2} + \sup_{0\leq t \leq T} \LN \LA \xi \RA^{s'} \int^t_0 Q_2 dt' \RN_{L_{\xi}^2} \\
& \geq  \sup_{0\leq t \leq T} \LN  \LA \xi \RA^{s'} \int^t_0 Q_2 \ dt' \RN_{L^2_{\xi}}\\
& \geq {1\over 32} \LN  \LA \xi \RA^{s'} {i\xi \over 1+\xi^2} \int_0^{1} \int_{\mathbb{R}} \widehat{v}_0(\xi_1)\widehat{v}_0(\xi-\xi_1)\ d\xi_1  \ dt' \RN_{L_{\xi}^2(I)}\\
& \gtrsim N^{-s} \LN \xi \RN_{L^2(I)} \gtrsim N^{-s},
\end{align*}
which violates \eqref{continuitybound} when $s<0$.
\end{proof}

\section{Blow-up criteria}\label{sec_bu}

In the previous two sections we establish the local well-posedness and ill-posedness of solutions to system \eqref{bbm2}. Attention is now given to the formation of singularities.

As is pointed out in \cite{BCS2}, \eqref{bbm2} possesses a Hamiltonian structure with the Hamiltonian functional
\begin{equation*}
\mathcal{H}(\vec{u}) = {1\over2} \int_\mathbb{R} [\eta^2 + (1+\eta) v^2 ]\ dx,
\end{equation*}
and hence for any local solution $\vec{u}\in X^s$ with $s>1/2$, $\mathcal{H}(\vec{u})$ is conserved.

Now we give our first blow-up criterion.
\begin{thm}\label{thm_blowup}
Let $s>3/2$. A solution $\vec{u} = (\eta, v)$ of \eqref{bbm2} with initial data $\vec{u}_0\in X^s$ blows up in finite time $T<\infty$ if and only if
\begin{equation}\label{bucond}
\liminf_{t\to T} \LB \inf_{x\in\mathbb{R}} v_x(t,x) \RB = -\infty.
\end{equation}
\end{thm}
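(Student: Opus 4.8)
The plan is to prove the blow-up criterion via a standard BBM-type energy method, exploiting the smoothing structure of the $(1-\partial_x^2)^{-1}$ operator. The key observation is that applying $(1-\partial_x^2)^{-1}$ to the evolution equations trades derivative loss for lower-order terms, so that $H^s$ control reduces to controlling the $L^\infty$ norms of $v_x$ (and $\eta$). Rewriting \eqref{bbm2} as
\begin{align*}
\eta_t &= -(1-\partial_x^2)^{-1}\partial_x\bigl(v + \eta v\bigr),\\
v_t &= -(1-\partial_x^2)^{-1}\partial_x\bigl(\eta + \tfrac{1}{2}v^2\bigr),
\end{align*}
I would first derive an \emph{a priori} differential inequality for $\|\vec u(t)\|_s^2$. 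Differentiating in time and integrating by parts, the worst terms are the ones in which a derivative falls on $v_x$ or $\eta$; using the commutator (Kato--Ponce) estimates together with Lemma~\ref{lem_reg}-type multiplier bounds, one expects an estimate of the schematic form
\begin{equation*}
\frac{d}{dt}\|\vec u(t)\|_s^2 \leq C\bigl(1 + \|v_x(t)\|_{L^\infty} + \|\eta(t)\|_{L^\infty}\bigr)\|\vec u(t)\|_s^2.
\end{equation*}

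Next I would show that the single hypothesis on $v_x$ in fact controls $\|\eta\|_{L^\infty}$ as well, so that the criterion need only mention $v_x$. The natural tool is the conserved Hamiltonian \eqref{Hamiltonian} together with the other conserved quantities listed in the Introduction, which bound the $L^2$-type norms and prevent $\eta$ from degenerating (in particular the factor $1+\eta$ in $\mathcal H$ must stay positive for the energy argument to close, which is where the one-sided nature of the condition enters). One direction of the equivalence is immediate: if $\vec u$ remains in $X^s$ with $s>3/2$ up to and including $T$, then by Sobolev embedding $v_x$ is bounded, so \eqref{bucond} is necessary. For the converse, assuming $\liminf_{t\to T}\inf_x v_x(t,x) > -\infty$, I would combine the conservation laws with the differential inequality and apply Gronwall to conclude $\sup_{[0,T]}\|\vec u(t)\|_s < \infty$, contradicting \eqref{bu} in Theorem~\ref{thm_wellpose}.

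**The main obstacle** is establishing that the one-sided lower bound on $v_x$ — rather than a two-sided $L^\infty$ bound — suffices to control the full $H^s$ norm. For the genuinely hyperbolic transport-type structure of the $v$-equation this is where the conserved Hamiltonian must be used cleverly: one must show that $\inf_x v_x$ bounded below, together with the conservation of $\mathcal H(\vec u)$, forces an upper bound on $\sup_x v_x$ and on $\|\eta\|_{L^\infty}$. I expect this to require tracking the evolution of $\int_\mathbb{R}(1+\eta)v^2\,dx$ and exploiting the sign structure, analogous to the one-sided blow-up criteria known for the Camassa--Holm and related equations, where a lower bound on the slope controls the solution but an upper bound does not. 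The reverse inequalities and the interpolation needed to pass from $L^\infty$ control of $v_x,\eta$ back to $H^s$ control are then routine via the commutator estimates and the smoothing in \eqref{notation}.
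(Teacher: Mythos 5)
Your reduction of the easy direction (if the solution stays in $X^s$, $s>3/2$, up to $T$ then $v_x$ is bounded by Sobolev embedding, so \eqref{bucond} forces blow-up) is fine, and you correctly locate the crux: why a \emph{one-sided} bound on $v_x$ should suffice. But your proposed resolution of that crux is a gap, not a proof, and it is not how the paper proceeds. You plan to run a Kato--Ponce energy estimate of the form $\frac{d}{dt}\|\vec u\|_s^2 \lesssim (1+\|v_x\|_{L^\infty}+\|\eta\|_{L^\infty})\|\vec u\|_s^2$ and then upgrade the one-sided hypothesis to two-sided control of $v_x$ and $\eta$ via the conserved Hamiltonian \eqref{Hamiltonian}. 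The Hamiltonian, however, is not coercive: the weight $1+\eta$ has no sign, so $\mathcal H$ yields no a priori bound on $|\eta|_2$ or $|v|_2$ unless one already knows $\eta$ is bounded below --- which is the hypothesis of the \emph{separate} Theorem \ref{thm_busurf}, not of this one, and the paper explicitly remarks that the conservation laws are too weak to give a priori bounds. Nor does your sketch contain any mechanism by which $\inf_x v_x>-\infty$ would force $\sup_x v_x<\infty$ or $\|\eta\|_{L^\infty}<\infty$. The hard direction is therefore left unproved.

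The paper's argument avoids all of this with one exact identity at the $X^1$ level: multiplying \eqref{bbm2} by $\vec u$ and integrating by parts, every term cancels except
\begin{equation*}
\frac{d}{dt}\|\vec u(t)\|_1^2 = -\int_{\mathbb{R}} v_x\,\eta^2\,dx .
\end{equation*}
The sign-definite weight $\eta^2\ge 0$ is precisely what converts the one-sided hypothesis $v_x\ge -M$ into $\frac{d}{dt}\|\vec u\|_1^2\le M\|\vec u\|_1^2$, and Gronwall bounds the $X^1$ norm. The passage from $X^1$ to $X^s$ then uses not an $L^\infty$ commutator estimate but the tame bilinear bound of Lemma \ref{lem_reg} through \eqref{density}, namely $\|\vec u(t)\|_s\le\|\vec u_0\|_s+C_s\int_0^t\|\vec u\|_{s'}\|\vec u\|_s\,dt'$ with $s'=1$, followed by Gronwall again; no pointwise bound on $v_x$ or $\eta$ is ever needed, and the case $3/2<s<2$ is handled by density and continuous dependence. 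To repair your proposal you must either find this cancellation (or an equivalent sign structure in the energy identity) or actually prove that the lower bound on $v_x$ controls $\|\eta\|_{L^\infty}$; the Camassa--Holm analogy by itself does not do so.
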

\begin{proof}

We first prove the result for some $s\geq 2$. Multiplying \eqref{bbm2} by $\vec{u}$ and using integration by parts we obtain
\begin{equation}\label{ibp1}
\begin{split}
{d\over dt} \|\vec{u}\|^2_{1} &= {d\over dt} \int_{\mathbb{R}} \LC \eta^2 + \eta_x^2 + v^2 +v_x^2 \RC \ dx \\
& = -\int_{\mathbb{R}} \LB v_x\eta + (\eta v)_x\eta + \eta_xv + v^2v_x \RB\ dx\\
& = - \int_\mathbb{R} v_x \eta^2\ dx.
\end{split}
\end{equation}
Thus applying Gronwall's inequality we know that the $X^1$-norm of the solution does not blow-up in finite time if and only if $v_x(t, \cdot)$ is bounded from below on $[0, T)$.

Next we use \eqref{density} in Theorem \ref{thm_wellpose} to conclude that the $X^r$-norm of solution does not blow up for all $1\leq r\leq s$ if and only if $v_x(t, \cdot)$ is bounded from below on $[0, T)$. Therefore we proved the statement of the theorem for all $s\geq2$. Lastly, a density argument and the continuous dependence on initial data ensures the validity for all $s>3/2$.
\end{proof}
\begin{rem}
The result in Theorem \ref{thm_blowup} indicates the blow-up of the slope of velocity. But it is quite different from the phenomenon of ``wave-breaking" (solution remains bounded but the slope becomes infinity in finite time) which happens in a class of shallow water models \cite{CH1,Co1,Co2,Co3,CE,Mc,Wh}. For our system \eqref{bbm2}, the conservation laws are not strong enough to provide {\it a priori} bounds for the solutions. Though in Theorem \ref{thm_blowup} the regularity index is assumed to be above $3/2$, from Theorem \ref{thm_wellpose} we see that when $s>1/2$ (hence the functions are pointwise defined), the maximal time of existence is independent of the regularity index $s$. Hence this suggests that one may be able to look for a blow-up criterion in a lower regularity setting. In fact we can further refine our blow-up criterion as below, which asserts that it is indeed the pointwise blow-up of the solution itself rather than its slope.
\end{rem}

\begin{thm}\label{thm_busurf}
Let $s>1/2$. A solution $\vec{u} = (\eta, v)$ of \eqref{bbm2} with initial data $\vec{u}_0\in X^s$ blows up in finite time $T<\infty$ if and only if
\begin{equation}\label{bucondsurf}
\liminf_{t\to T} \LB \inf_{x\in\mathbb{R}} \eta(t,x) \RB = -\infty.
\end{equation}
\end{thm}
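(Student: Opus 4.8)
The plan is to establish the equivalence first for classical solutions ($s\ge 2$) and then to descend to the full range $s>1/2$ by approximation, using from \thmref{thm_wellpose} that the maximal time $T_s$ is independent of $s$ when $s>1/2$ and that $H^s\hookrightarrow L^\infty$. One implication is immediate. If $\liminf_{t\to T}\inf_x\eta(t,x)=-\infty$, then $\|\eta(t)\|_{L^\infty}\to\infty$ along some sequence $t\to T$, and since $\|\eta\|_{L^\infty}\lesssim\|\eta\|_{H^s}\le\|\vec u\|_s$ for $s>1/2$, the solution leaves every bounded subset of $X^s$, which forces $T<\infty$ through \eqref{bu}.

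The substance is the reverse implication, which I would argue by contraposition: assuming $\eta\ge -M$ on $[0,T)$, I will show the solution persists. By \thmref{thm_blowup} it suffices to bound $\inf_x v_x(t,\cdot)$ from below on $[0,T)$, and the crucial ingredient is a pointwise evolution law for $v_x$. Writing the second equation of \eqref{bbm2} as $v_t=-(1-\partial_x^2)^{-1}\partial_x(\eta+\tfrac12 v^2)$, differentiating in $x$, and simplifying $(1-\partial_x^2)^{-1}\partial_x^2=(1-\partial_x^2)^{-1}-1$, one finds
\[
\partial_t v_x=\Big(\eta+\tfrac12 v^2\Big)-G\ast\Big(\eta+\tfrac12 v^2\Big),\qquad G(x)=\tfrac12 e^{-|x|}.
\]
Let $m(t)=\inf_x v_x(t,\cdot)$, attained at some $x_0(t)$ for classical solutions. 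Since $\eta(x_0)+\tfrac12 v(x_0)^2\ge -M$, and since $G\ge 0$ with $G\in L^2$ and $\int G=1$, the convolution satisfies $G\ast(\eta+\tfrac12 v^2)(x_0)\le \tfrac12\|\eta\|_{L^2}+\tfrac14\|v\|_{L^2}^2$, whence
\[
\frac{d}{dt}m(t)\ge -M-\tfrac12\|\eta(t)\|_{L^2}-\tfrac14\|v(t)\|_{L^2}^2 .
\]
If the right-hand side is integrable on $[0,T)$, then $m(t)$ stays bounded below, so does $v_x$, and \thmref{thm_blowup} then excludes blow-up.

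The main obstacle is therefore to control $\|\eta(t)\|_{L^2}$ and $\|v(t)\|_{L^2}$ on $[0,T)$ from the one-sided bound $\eta\ge -M$ alone. The natural tool is the conserved Hamiltonian $2\mathcal{H}=\int(\eta^2+v^2+\eta v^2)\,dx$, but its cubic term is indefinite: $\eta\ge -M$ only gives $\int\eta v^2\,dx\ge -M\|v\|_{L^2}^2$, so $\mathcal{H}$ controls $\|\eta\|_{L^2}^2+(1-M)\|v\|_{L^2}^2$, which is coercive only for $M<1$. This is exactly the phenomenon flagged in the remark, that the conservation laws are not strong enough to yield a priori bounds. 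Closing the estimate for arbitrary $M$ is the crux, and I expect it to require either an energy better adapted to the $\eta$-criterion — one whose production term is controlled by $\|\max(-\eta,0)\|_{L^\infty}\le M$ rather than by $\|\min(v_x,0)\|_{L^\infty}$, as happens in the identity $\frac{d}{dt}\|\vec u\|_1^2=-\int\eta^2 v_x\,dx$ behind \thmref{thm_blowup} — or a bootstrap that breaks the feedback between the growth of $\|v\|_{L^2}$ and the loss of the lower bound on $v_x$ (a feedback which, estimated crudely, does admit a finite-time singularity, so it must be handled with the genuine structure of the system). Two routine points remain: justifying the differentiation of $m(t)=\inf_x v_x$, standard for $s\ge 2$ via an envelope argument, and the final density and continuous-dependence passage from $s\ge 2$ to $s>1/2$, where $H^s\hookrightarrow L^\infty$ ensures the pointwise lower bound on $\eta$ survives in the limit.
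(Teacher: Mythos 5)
Your easy direction and your identification of where the difficulty lies are both correct, but the proposal does not prove the theorem: you explicitly leave open ``the crux'', namely deriving a priori bounds on $\|\eta(t)\|_{L^2}$ and $\|v(t)\|_{L^2}$ from the one-sided bound $\eta\ge -M$ when $M\ge 1$, and that step \emph{is} the substance of the theorem. The obstruction you describe (loss of coercivity of $\mathcal H$ for $M\ge 1$) is real if one tries to use the Hamiltonian alone, but the paper circumvents it with a two-step energy argument you did not find. First, conservation of $\mathcal H$ is used not for coercivity but to \emph{solve for} $\int\eta^2$: from $\eta\ge -M$ one gets
\begin{equation*}
\int_{\mathbb R}\eta^2\,dx \;=\; 2\mathcal H(\vec u_0)-\int_{\mathbb R}(1+\eta)v^2\,dx\;\le\; 2\mathcal H(\vec u_0)+M\int_{\mathbb R}v^2\,dx .
\end{equation*}
Second, multiplying the $v$-equation by $2v$ gives the exact identity $\frac{d}{dt}\int(v^2+v_x^2)\,dx=2\int v_x\eta\,dx\le |v_x|_2^2+|\eta|_2^2$, and inserting the Hamiltonian bound closes a Gronwall inequality $\frac{d}{dt}\|v\|_{H^1}^2\le 2\mathcal H(\vec u_0)+(M+1)\|v\|_{H^1}^2$, valid for \emph{every} $M$ since $M$ only enters as a Gronwall constant. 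With $\|v\|_{H^1}$ bounded, the $\eta$-equation energy identity $\frac{d}{dt}\int(\eta^2+\eta_x^2)\,dx=-2\int\eta v_x\,dx+2\int\eta\eta_x v\,dx$ then bounds $\|\eta\|_{H^1}$, and the regularity estimate \eqref{density} propagates the bound to $X^s$.

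Two further remarks. Once these $H^1$ bounds are in hand, your detour through \thmref{thm_blowup} and the pointwise evolution of $m(t)=\inf_x v_x$ is unnecessary (and would in any case force you back up to $s>3/2$, whereas the theorem is claimed for all $s>1/2$); the paper concludes directly from the boundedness of the $X^1$-norm. Also, your reduction requires $s\ge 2$ to run the envelope argument for $m(t)$, while the paper's energy identities only need $s\ge 1$ before the final density step, which matters for reaching the stated range $s>1/2$.
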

\begin{proof}

``$\Rightarrow$": Similarly, we first prove the result for $s\geq 1$. If not, suppose that $\eta(t,x) \geq -M$ on $0\leq t <T$ for some $M>0$. It follows from the conservation law of the Hamiltonian $\mathcal{H}$ that
\begin{align*}
\int_\mathbb{R} \eta^2 \ dx = \mathcal{H}(\vec{u}_0) - \int_\mathbb{R} (1+\eta) v^2 \ dx \leq \mathcal{H}(\vec{u}_0) + M  \int_\mathbb{R} v^2 \ dx.
\end{align*}

On the other hand, multiplying the second equation in \eqref{bbm2} by $2v$, integrating over $\mathbb{R}$, and using the above inequality we have
\begin{align*}
{d\over dt}\int_\mathbb{R} v^2 + v_x^2 \ dx & = 2 \int_\mathbb{R} v_x\eta \ dx \leq |v_x|_2^2 + |\eta|^2_2\\
& \leq \mathcal{H}(\vec{u}_0) + M |v|^2_2 + |v_x|_2^2\\
& \leq \mathcal{H}(\vec{u}_0) + (M + 1) \|v\|^2_{H^1}.
\end{align*}
Then using Gronwall's inequality we deduce
\begin{equation*}
\|v\|_{H^1} \leq C(\vec{u}_0, T).
\end{equation*}

Now multiplying the first equation in \eqref{bbm2} by $2\eta$, integrating over $\mathbb{R}$, and using the estimate for $\|v\|_{H^1}$ we have
\begin{align*}
{d\over dt}\int_\mathbb{R} \eta^2 + \eta_x^2 \ dx & = -2\int_\mathbb{R} \eta v_x\ dx + 2\int_\mathbb{R} \eta \eta_x v \ dx\\
& \leq 2 |v_x|_2 |\eta|_2 + |v|_\infty \LC |\eta|^2_2 + |\eta_x|^2_2 \RC\\
& \lesssim \|\eta\|_{H^1} + \|\eta\|^2_{H^1}.
\end{align*}
Hence
\begin{equation*}
\|\eta\|_{H^1} \leq C(\vec{u}_0, T).
\end{equation*}
In this way we see that the $X^1$-norm of the solution does not blow up. Then from \eqref{density} in Theorem \ref{thm_wellpose} we conclude that the $X^s$-norm of solution does not blow up, which is a contradiction.

Now for $1/2<s<1$, a density argument and the continuous dependence on initial data ensures the same result.

\vspace{.1in}

\noindent ``$\Leftarrow$":  This is can be proved by a use of the Sobolev embedding $H^s(\mathbb{R}) \hookrightarrow L^\infty(\mathbb{R})$ for $s>1/2$.
\end{proof}

\begin{rem}
(1) From the above two theorems it is clearly seen that when $s>3/2$, the blow-up of the slope of the velocity is equivalent to the blow-up of the surface profile.\\
(2) From Theorem \ref{thm_busurf} we also obtain a criterion for global existence of solutions: for $s>1/2$, if $\eta(t,x)$ is bounded from below on $t\in[0,T_{\max})$ , then $T_{\max} = \infty$. This is an improvement of the result in \cite{AABCW}, where the global existence of solutions requires that $\eta(t,x)> -1 + \alpha$ for some $\alpha>0$.
\end{rem}

Finally we give a blow-up characterization of the velocity profile.
\begin{cor}\label{cor_bu}
Let $s>1/2$. A solution $\vec{u} = (\eta, v)$ of \eqref{bbm2} with initial data $\vec{u}_0\in X^s$ blows up in finite time $T<\infty$ if and only if
\begin{equation}\label{bucondvelo}
\limsup_{t\to T} |v(t, \cdot)|_\infty = \infty.
\end{equation}
\end{cor}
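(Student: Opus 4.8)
The plan is to prove the two implications separately, reading the criterion as an equivalence with finite-time blow-up, and to lean on conservation of the Hamiltonian $\mathcal H$ together with the bootstrapping estimate \eqref{density}, exactly in the spirit of the proof of Theorem \ref{thm_busurf}.

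The implication ``$\limsup_{t\to T}|v(t,\cdot)|_\infty=\infty \Rightarrow$ blow-up'' is the soft one. Since $s>1/2$, the Sobolev embedding $H^s(\mathbb{R})\hookrightarrow L^\infty(\mathbb{R})$ gives $|v(t,\cdot)|_\infty \lesssim \|v(t)\|_s \le \|\vec u(t)\|_s$. Hence $\limsup_{t\to T}|v|_\infty=\infty$ forces $\limsup_{t\to T}\|\vec u(t)\|_s=\infty$; because the local theory of Theorem \ref{thm_wellpose} extends the solution as long as the $X^s$-norm stays finite, this means the maximal existence time is at most the finite time $T$, which is precisely finite-time blow-up (the blow-up alternative \eqref{bu}).

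For the converse ``blow-up $\Rightarrow \limsup_{t\to T}|v|_\infty=\infty$'' I would argue by contradiction, assuming the solution blows up at a finite $T$ yet $|v(t,\cdot)|_\infty\le M$ on $[0,T)$. I first treat $s\ge1$, where the solution lies in $X^1$ and the $H^1$ energy identities are legitimate; the range $1/2<s<1$ then follows by the same density and continuous-dependence argument used in Theorem \ref{thm_busurf}. The crux is to bound the $X^1$-norm. First, using conservation of $\mathcal H$, write $\int_{\mathbb{R}}\eta^2 = 2\mathcal H - \int_{\mathbb{R}} v^2 - \int_{\mathbb{R}} \eta v^2$ and control the cubic term by $|\int_{\mathbb{R}} \eta v^2|\le |v|_\infty|\eta|_2|v|_2 \le M|\eta|_2|v|_2$, so that Young's inequality yields $|\eta|_2^2 \le 4\mathcal H + M^2|v|_2^2$. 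Feeding this into the identity $\frac{d}{dt}\|v\|_{H^1}^2 = 2\int_{\mathbb{R}} v_x\eta \le |v_x|_2^2 + |\eta|_2^2 \le 4\mathcal H + (1+M^2)\|v\|_{H^1}^2$ and applying Gronwall bounds $\|v(t)\|_{H^1}$ on $[0,T)$. Next, multiplying the first equation of \eqref{bbm2} by $2\eta$ gives $\frac{d}{dt}\|\eta\|_{H^1}^2 = -2\int_{\mathbb{R}}\eta v_x + 2\int_{\mathbb{R}} \eta\eta_x v$, where the last term is estimated by $|v|_\infty(|\eta|_2^2+|\eta_x|_2^2)\le M\|\eta\|_{H^1}^2$ (this uses only the $L^\infty$-bound on $v$, not on $v_x$), and $-2\int_{\mathbb{R}}\eta v_x\le |\eta|_2^2 + |v_x|_2^2$ is absorbed using the bound on $\|v\|_{H^1}$; Gronwall then bounds $\|\eta(t)\|_{H^1}$. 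With $\|\vec u(t)\|_1$ bounded on $[0,T)$, the estimate \eqref{density} (with $s'=1$, iterated if $s>2$) and Gronwall upgrade this to a bound on $\|\vec u(t)\|_s$, contradicting \eqref{bu}.

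The main obstacle I anticipate is the first step of the energy argument: unlike in Theorem \ref{thm_busurf}, we have no pointwise lower bound on $\eta$, so the cubic term $\int_{\mathbb{R}}\eta v^2$ cannot simply be discarded and must instead be absorbed into $|\eta|_2^2$ via Cauchy--Schwarz and Young's inequality, using only $|v|_\infty\le M$. Once the closed $X^1$ estimate is in hand, the bootstrap from $X^1$ to $X^s$ is routine and identical to the arguments in Theorems \ref{thm_wellpose} and \ref{thm_busurf}. I would also note that, together with Theorem \ref{thm_busurf}, this shows $\eta$ unbounded below, $v_x$ unbounded below (for $s>3/2$), and $|v|_\infty$ unbounded are all equivalent characterizations of blow-up.
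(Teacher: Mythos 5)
Your proof is correct, but it takes a longer route than the paper's. For the hard direction (blow-up implies $|v|_\infty$ unbounded), the paper does not invoke the Hamiltonian at all: it goes back to the combined $X^1$ identity \eqref{ibp1}, $\frac{d}{dt}\|\vec u\|_1^2=-\int_{\mathbb R}v_x\eta^2\,dx$, integrates by parts once to get $2\int_{\mathbb R}v\eta\eta_x\,dx\le 2M\|\eta\|_{H^1}^2\le 2M\|\vec u\|_1^2$, and closes the $X^1$ bound with a single Gronwall; the point of \eqref{ibp1} is precisely that the contributions of the $v$-equation cancel, leaving only the one cubic term that the $L^\infty$ bound on $v$ controls. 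You instead run the two component energy identities separately, as in the proof of Theorem \ref{thm_busurf}: conservation of $\mathcal H$ plus Young's inequality to absorb $\int\eta v^2$ and get $|\eta|_2^2\lesssim 1+M^2|v|_2^2$, then Gronwall for $\|v\|_{H^1}$, then Gronwall for $\|\eta\|_{H^1}$ (where your term $2\int\eta\eta_x v$ is exactly the term the paper isolates). Your computations check out — in particular the observation that the cubic Hamiltonian term must be absorbed by Cauchy--Schwarz/Young rather than discarded is handled correctly — and the soft direction via Sobolev embedding and the bootstrap from $X^1$ to $X^s$ via \eqref{density} (iterated when $s>2$, density for $1/2<s<1$) match the paper. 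What your approach buys is a proof that runs fully parallel to Theorem \ref{thm_busurf} and makes the role of $\mathcal H$ explicit; what the paper's buys is brevity and independence from the Hamiltonian structure.
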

\begin{proof}
``$\Rightarrow$": If not, suppose $|v|_\infty \leq M$ on $0\leq t <T$ for some $M>0$. From \eqref{ibp1}
\begin{equation*}
{d\over dt} \|\vec{u}\|^2_{1} = - \int_\mathbb{R} v_x \eta^2\ dx = 2 \int_\mathbb{R} v \eta \eta_x \ dx \leq 2 M \|\eta\|^2_{H^1}.
\end{equation*}
So by Gronwall's inequality, $\|\vec{u}\|_{1}$ is bounded. Hence $|\eta|_\infty$ remains bounded, which is a contradiction.

\vspace{.1in}

\noindent ``$\Leftarrow$": Again we can prove this by the Sobolev embedding $H^s(\mathbb{R}) \hookrightarrow L^\infty(\mathbb{R})$ for $s>1/2$.
\end{proof}

\begin{appendix}
\section{Linear estimates}\label{app_linest}

In this section, attention is now given to $ L^1 \to L^{\infty} $ estimate of solutions to the linear equation of \eqref{bbm2}.

\begin{thm}\label{thm_linest}
Consider the linearized equation of \eqref{bbm2}
\begin{equation}\label{linbbm2}
\vec u_t - A\vec u = 0
\end{equation}
with $ \vec u(0) = \vec u_0 = \begin{pmatrix} \eta_0 \\ v_0 \end{pmatrix}$.  If $ \vec u_0 \in X^1 \cap (L^1\times L^1)$, then $ \vec u (t) \in X^1 \cap (L^{\infty}\times L^{\infty}) $ and satisfies
\begin{equation}\label{linest}
| \vec u (t) |_{\infty,\infty} \lesssim (1 + t)^{- {1 \over 8}} ( \| \vec u_0 \|_1 + |\vec{u}_0|_{1,1} ), \qquad {\rm for }\;  t > 0.
\end{equation}
\end{thm}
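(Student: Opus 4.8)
The plan is to reduce the matrix estimate to a dispersive bound for the two scalar half-wave propagators and then to separate frequencies. By \eqref{linop} and \eqref{phase}, each component of $\vec u(t)=S(t)\vec u_0$ is a finite linear combination of $e^{\pm it\lambda(D)}\eta_0$ and $e^{\pm it\lambda(D)}v_0$, where $\lambda(\xi)=\xi\langle\xi\rangle^{-2}$; since $|\widehat{L}_1|,|\widehat{L}_2|\le 1$ and $\cos,\sin$ are linear combinations of $e^{\pm it\lambda}$, it suffices to prove the claimed bound for the single operator $e^{it\lambda(D)}$ acting on a scalar datum $\phi\in H^1\cap L^1$. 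For $0<t\le 1$ there is nothing to do: $(1+t)^{-1/8}\sim 1$, and the embedding $H^1(\RR)\hookrightarrow L^\infty(\RR)$ together with the unitarity of $S(t)$ on $X^1$ gives $\|e^{it\lambda(D)}\phi\|_{L^\infty}\lesssim \|\phi\|_{H^1}$. The whole content is therefore the large-time decay, which I will obtain by a stationary-phase analysis of the convolution kernels $K_\pm(t,x)=\int_\RR e^{i(x\xi\pm t\lambda(\xi))}\,d\xi$ read off dyadic frequency pieces.

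The geometry of the phase dictates the argument. A direct computation gives $\lambda'(\xi)=(1-\xi^2)\langle\xi\rangle^{-4}$ and $\lambda''(\xi)=2\xi(\xi^2-3)\langle\xi\rangle^{-6}$, so $\lambda''$ vanishes only at the finitely many points $\xi=0,\pm\sqrt3$, and at each of them $\lambda'''\ne 0$; moreover $\lambda''(\xi)\sim \xi^{-3}$ as $|\xi|\to\infty$. I will split with a smooth partition into a compact piece $|\xi|\lesssim 1$ (absorbing the points $0,\pm1,\pm\sqrt3$ where $\lambda'$ or $\lambda''$ vanishes) and a high-frequency piece $|\xi|\gtrsim 1$ decomposed dyadically into blocks $|\xi|\sim R$, $R\ge 1$. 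On the compact piece every stationary point of $x\xi+t\lambda(\xi)$ is nondegenerate except where $\lambda''$ vanishes simply, and there $\lambda'''\ne 0$; so the van der Corput lemma (second- and third-derivative tests) yields the kernel bound $\|K^{\rm low}(t,\cdot)\|_{L^\infty_x}\lesssim (1+t)^{-1/3}$, uniformly in $x$, and hence $\|e^{it\lambda(D)}\phi^{\rm low}\|_{L^\infty}\lesssim (1+t)^{-1/3}\|\phi\|_{L^1}$. Since $(1+t)^{-1/3}\le (1+t)^{-1/8}$, the low-frequency contribution is already better than required.

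The binding estimate is the high-frequency one, where the weak dispersion ($\lambda''\sim \xi^{-3}$) forces a trade-off against the $H^1$ regularity. On a block $|\xi|\sim R$ the second-derivative van der Corput test (with amplitude of bounded variation and $\inf|\lambda''|\sim R^{-3}$) gives $\|K_R(t,\cdot)\|_{L^\infty_x}\lesssim (t\,R^{-3})^{-1/2}=t^{-1/2}R^{3/2}$, whence $\|e^{it\lambda(D)}\phi_R\|_{L^\infty}\lesssim t^{-1/2}R^{3/2}\|\phi\|_{L^1}$; on the other hand Hausdorff--Young followed by Cauchy--Schwarz on the block gives the regularity bound $\|e^{it\lambda(D)}\phi_R\|_{L^\infty}\le \int_{|\xi|\sim R}|\widehat\phi|\,d\xi\lesssim R^{-1/2}\|\phi_R\|_{H^1}$. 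I will use the oscillatory ($L^1$) bound on the blocks $R\le R^\ast$ and the regularity ($H^1$) bound on the blocks $R>R^\ast$; both families sum geometrically without loss, producing $t^{-1/2}(R^\ast)^{3/2}\|\phi\|_{L^1}$ and $(R^\ast)^{-1/2}\|\phi\|_{H^1}$ respectively. Optimizing by taking $R^\ast\sim t^{1/4}$ balances the two at the common value $(R^\ast)^{-1/2}\sim t^{-1/8}$, which is exactly the claimed rate and explains the exponent $1/8$. Collecting the low- and high-frequency contributions for each of $e^{\pm it\lambda(D)}\eta_0$ and $e^{\pm it\lambda(D)}v_0$ and summing over the two components yields \eqref{linest}. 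The main obstacle is precisely this high-frequency balance: van der Corput alone is useless (its $R^{3/2}$ growth is not summable), so one must interpolate against the single derivative of regularity carried by the $X^1$ norm, and it is this interpolation that yields the slow $t^{-1/8}$ decay characteristic of the weak dispersion.
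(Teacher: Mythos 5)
Your proposal is correct and follows essentially the same route as the paper: a van der Corput bound on the oscillatory kernel at low and intermediate frequencies (where the degenerate points $\xi=0,\pm\sqrt{3}$ of $\lambda''$ contribute the rate $t^{-1/3}$), an $H^1$--Cauchy--Schwarz tail bound at high frequencies costing $n^{-1/2}$, and a frequency truncation optimized at $n\sim t^{1/4}$ to produce the exponent $1/8$. The paper works with a single cutoff at $n=t^{1/4}$ and excises $\epsilon$-neighborhoods of the degenerate points with $\epsilon=t^{-1/3}$ rather than using your dyadic decomposition and third-derivative test, but these differences are cosmetic and the balance yielding $t^{-1/8}$ is identical.
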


To prove Theorem \ref{thm_linest}, we need the following two lemmas. The first one is the well-known Van der Corput lemma which we state without proof \cite{SoSt, St}.

\begin{lem}\label{lem_vander}
Let $ h $ be either convex or concave on $ [a, b] $ with $ - \infty \le a < b \le \infty. $ Then
\begin{equation*}
\left | \int_a^b e^{i h(\xi) } d\xi \right | \le 4 \{ \min_{[a, b]} |h'' | \}^{ - 1/2} \qquad {\rm for }\;   h'' \neq 0 \quad {\rm in }\,  [a, b].
\end{equation*}
\end{lem}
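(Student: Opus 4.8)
The plan is to give the classical second-derivative van der Corput argument, splitting the integral into a small ``stationary'' window around the point where $h'$ is smallest and a ``non-stationary'' remainder where repeated integration by parts is available. First I would exploit the hypothesis directly: since $h$ is convex or concave, $h''$ keeps a constant sign on $[a,b]$, so $h'$ is monotone there. Writing $\lambda = \min_{[a,b]}|h''| > 0$, monotonicity of $h'$ forces $|h'|$ to attain its minimum at a single point $c \in [a,b]$, which is either the unique interior zero of $h'$ or one of the endpoints. This $c$ is the only place where the phase $h$ can be stationary, and it is the obstruction to decay.

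Next I would fix a parameter $\delta > 0$ and decompose $[a,b]$ into the window $J = [c-\delta, c+\delta]\cap[a,b]$ and its complement. On $J$ I estimate trivially, $\left|\int_J e^{ih(\xi)}\,d\xi\right| \le |J| \le 2\delta$. On each connected component of the complement, say $[c+\delta, b]$, the mean value theorem together with $|h''| \ge \lambda$ gives $|h'(\xi)| \ge \lambda|\xi - c| \ge \lambda\delta$, and $h'$ has a fixed sign there, so $1/h'$ is monotone. Integrating by parts,
\begin{equation*}
\int_{c+\delta}^b e^{ih(\xi)}\,d\xi = \left[\frac{e^{ih(\xi)}}{ih'(\xi)}\right]_{c+\delta}^b - \frac{1}{i}\int_{c+\delta}^b e^{ih(\xi)}\,\frac{d}{d\xi}\left(\frac{1}{h'(\xi)}\right)\,d\xi,
\end{equation*}
and I bound the boundary terms by $1/(\lambda\delta)$ while controlling the remaining integral by the total variation of the monotone function $1/h'$, which is again $\le 1/(\lambda\delta)$. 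Thus each non-stationary component contributes $O\big(1/(\lambda\delta)\big)$, and the same estimate applies to $[a,c-\delta]$.

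Combining the pieces yields a bound of the shape $2\delta + C/(\lambda\delta)$, and optimizing in $\delta$ by choosing $\delta \sim \lambda^{-1/2}$ produces the decay $\lesssim \lambda^{-1/2} = \{\min_{[a,b]}|h''|\}^{-1/2}$; a careful accounting of the constants in the boundary and variation terms sharpens this to the stated factor $4$, though any absolute constant suffices for the application in Theorem~\ref{thm_linest}. The edge cases in which $c$ is an endpoint (only one non-stationary component) or $h'$ has no zero in $[a,b]$ are handled identically. I expect the only genuinely delicate point to be the non-stationary estimate: one must use that $1/h'$ is \emph{monotone} on each component, so that the error integral is controlled by endpoint values rather than by $\int |h''/(h')^2|$, which would otherwise diverge as one approaches the near-stationary boundary $c\pm\delta$. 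This monotonicity, furnished precisely by the convexity/concavity hypothesis, is what makes the integration-by-parts bound uniform and independent of $h$.
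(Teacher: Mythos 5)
The paper does not prove this lemma at all: it is stated verbatim as the ``well-known Van der Corput lemma'' and deferred to the references \cite{SoSt, St}, so there is no internal proof to measure your argument against. What you have written is the standard (and correct) proof of the second-derivative van der Corput estimate: constant sign of $h''$ gives monotonicity of $h'$, hence a single point $c$ where $|h'|$ is minimal; the window $[c-\delta,c+\delta]$ is estimated trivially by $2\delta$; off the window $|h'(\xi)|\ge \lambda|\xi-c|\ge\lambda\delta$ and one integrates by parts, using that $1/h'$ is monotone of constant sign so that the variation term is controlled by endpoint values rather than by $\int|h''|/(h')^2$; optimizing $\delta\sim\lambda^{-1/2}$ gives the claimed decay. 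All of this is sound, including the unbounded-interval and endpoint cases, and it is exactly the mechanism the cited sources use.

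The one claim I would not let stand is that ``a careful accounting of the constants sharpens this to the stated factor $4$.'' Count what your own decomposition produces: each of the two non-stationary components contributes two boundary terms of size $\le (\lambda\delta)^{-1}$ plus a variation term of size $\le(\lambda\delta)^{-1}$, so the total is at best $2\delta+4/(\lambda\delta)$ (and $2\delta+6/(\lambda\delta)$ as literally written), whose minimum over $\delta$ is $4\sqrt{2}\,\lambda^{-1/2}$ (respectively $4\sqrt{3}\,\lambda^{-1/2}$), not $4\lambda^{-1/2}$. Getting the constant $4$ requires a sharper first-derivative estimate (e.g.\ the second mean value theorem applied separately to $\cos h$ and $\sin h$) or simply quoting the references; indeed Stein's version of the lemma carries the constant $8$. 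This is a cosmetic defect here, since \lemref{lem_vanderlinear} and Theorem~\ref{thm_linest} only use the bound up to an unspecified absolute constant, but you should either prove the constant you assert or weaken the statement to $C\{\min_{[a,b]}|h''|\}^{-1/2}$.
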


Applying the above Van der Corput lemma to the linearized equation \eqref{linbbm2}, we obtain
\begin{lem}\label{lem_vanderlinear}
Let $ n, t > 1 $ and $ 0 < \epsilon < 1. $  Then
\begin{equation}\label{suplinest}
\sup_{\alpha \in {\bf R}} \left | \int_{|\xi | \le n } e^{ i t h (\xi, \alpha ) } d \xi \right | \lesssim  \left ( \epsilon + t^{ - {1 \over 2}} \max \{ \epsilon^{- { 1 \over 2}}, n^{3 \over 2} \} \right )
\end{equation}
where $ \displaystyle h(\xi, \alpha ) = { \xi \over 1 + \xi^2 } + \alpha \xi $.
\end{lem}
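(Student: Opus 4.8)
Looking at Lemma A.4 (lem_vanderlinear), I need to bound an oscillatory integral whose phase is $h(\xi,\alpha) = \xi/(1+\xi^2) + \alpha\xi$, uniformly in $\alpha$, over the frequency window $|\xi|\le n$.

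My plan is as follows.

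\medskip

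The plan is to split the integration region $\{|\xi|\le n\}$ into a small neighborhood of the points where the second derivative $h_{\xi\xi}$ degenerates, and its complement, and to apply Van der Corput's lemma (Lemma \ref{lem_vander}) on the pieces where $h_{\xi\xi}$ is bounded away from zero. First I would compute the derivatives of the phase. Since $\alpha\xi$ is linear it does not affect $h_{\xi\xi}$, so I only need the convexity/concavity structure of $\xi/(1+\xi^2)$. A direct computation gives
\begin{equation*}
h_{\xi\xi}(\xi,\alpha) = \frac{d^2}{d\xi^2}\left(\frac{\xi}{1+\xi^2}\right) = \frac{2\xi(\xi^2-3)}{(1+\xi^2)^3},
\end{equation*}
which vanishes exactly at $\xi=0$ and $\xi=\pm\sqrt{3}$. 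These are the stationary points of the curvature, and near them Van der Corput gives no useful decay, so they must be excised.

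\medskip

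Next I would fix the small parameter $\epsilon$ and remove $\epsilon$-neighborhoods of the three bad points $\{0,\pm\sqrt{3}\}$; the total measure removed is $O(\epsilon)$, which contributes the first term $\epsilon$ in the bound \eqref{suplinest} by the trivial estimate $|\int e^{ith}\,d\xi|\le (\text{length})$. On each of the remaining intervals, $h(\cdot,\alpha)$ is either convex or concave (the sign of $h_{\xi\xi}$ is constant there since we have stayed away from its zeros), so Lemma \ref{lem_vander} applies and yields a bound of the form $4 t^{-1/2}\{\min |h_{\xi\xi}|\}^{-1/2}$. The main work is to lower-bound $|h_{\xi\xi}|$ on these intervals. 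Near $\xi=0$ and $\xi=\pm\sqrt3$ the factor vanishing linearly forces $|h_{\xi\xi}|\gtrsim \epsilon$ (giving the factor $\epsilon^{-1/2}$), while for large $|\xi|\sim n$ the denominator $(1+\xi^2)^3$ makes $|h_{\xi\xi}|\gtrsim n^{-3}$ (giving the factor $n^{3/2}$). Taking the worse of these two regimes produces the $\max\{\epsilon^{-1/2}, n^{3/2}\}$ factor in the second term, and one sums the finitely many (at most a fixed constant number of) intervals.

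\medskip

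The step I expect to be the main obstacle is obtaining the lower bound on $|h_{\xi\xi}|$ \emph{uniformly over all the intervals at once} and correctly matching it to the two competing scales $\epsilon^{-1/2}$ and $n^{3/2}$. The subtlety is that the relevant minimum of $|h_{\xi\xi}|$ is attained either at the boundary of an excised $\epsilon$-neighborhood (scale $\epsilon$) or at the endpoint $|\xi|=n$ (scale $n^{-3}$), and one must verify that these are the only two competing minima and that Van der Corput's monotonicity/single-sign hypothesis genuinely holds on each piece. Once the correct case analysis of where $\min|h_{\xi\xi}|$ lives is set up, adding the trivial contribution from the excised set and the Van der Corput contributions from the good intervals gives precisely the right-hand side of \eqref{suplinest}, with all constants absorbed into the $\lesssim$.
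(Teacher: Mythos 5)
Your proposal is correct and follows essentially the same route as the paper: excise $\epsilon$-neighborhoods of the curvature zeros $\{0,\pm\sqrt{3}\}$ (trivial bound $O(\epsilon)$), then apply Van der Corput on the remaining intervals where $|h_{\xi\xi}|$ is bounded below either by $\sim\epsilon$ (near the excised points) or by $\sim n^{-3}$ (near $|\xi|=n$), yielding the $t^{-1/2}\max\{\epsilon^{-1/2},n^{3/2}\}$ term. Your identification of the two competing minima of $|h_{\xi\xi}|$ matches the paper's case analysis exactly.
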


\begin{proof} We use the stationary phase method \cite{Liu, SoSt} and Lemma \ref{lem_vander}. Since
$$
{\partial h \over \partial \xi } = { 1 - \xi^2 \over ( 1 + \xi^2)^2 } + \alpha \qquad {\rm and }
$$
$$
{\partial^2 h \over \partial \xi^2 } = { 2 \xi ( \xi^2 - 3) \over ( 1 + \xi^2 )^3},
$$
for $ \epsilon $ small and $ n $ large enough, it follows from Lemma 2.2 that
$$
\left | \int_{ \epsilon < |\xi | < \sqrt 3 - \epsilon } e^{i t h(\xi, \alpha ) } d \xi \right | \le 4 t^{ - { 1 \over 2}} \min \left \{ \left | { \partial^2 h \over \partial \xi^2} (\epsilon, \alpha) \right |^{-{1 \over 2 }}, \;  \left | { \partial^2 h \over \partial \xi^2} (\sqrt 3 - \epsilon, \alpha) \right |^{-{1 \over 2}}  \right \} \le c \epsilon^{ - { 1 \over 2}} t^{ - { 1\over 2}}
$$
and
\begin{align*}
\LV \int_{ \sqrt 3 + \epsilon < |\xi | < n } e^{i t h(\xi, \alpha ) } d \xi \RV & \le 4 t^{ - { 1 \over 2}} \min \left \{ \left | { \partial^2 h \over \partial \xi^2} (n,  \alpha) \right |^{-{ 1 \over 2}},   \left | { \partial^2 h \over \partial \xi^2} (\sqrt 3 + \epsilon, \alpha) \right |^{-{ 1 \over 2}}  \right \} \\ & \le c  t^{ - { 1 \over 2}} \max \{n^{3 \over 2}, \epsilon^{ - { 1 \over 2}} \}  .
\end{align*}
On the other hand,
$$
\left | \int_{|\xi | < \epsilon } e^{i t h(\xi, \alpha ) } d \xi \right |  \le c \epsilon
$$
and
$$
 \left | \int_{ \sqrt 3 - \epsilon < |\xi | < \sqrt 3 + \epsilon } e^{i t h(\xi, \alpha ) } d \xi \right | \le c \epsilon.
$$
Therefore, the above estimates imply \eqref{suplinest}.
\end{proof}

\vskip 0.2cm
\begin{proof}[Proof of Theorem \ref{thm_linest}.]  One can write the solution  $ \vec u = S(t) \vec u_0 $ of \eqref{linbbm2} as
\begin{equation}\label{linsoln}
\vec u (t) = \int^{\infty}_{-\infty}  e^{i x \xi} \begin{pmatrix}  \cos (t \xi \left < \xi \right >^{-2} ) & i  \sin (t \xi \left < \xi \right >^{-2} ) \\ i  \sin (t \xi \left < \xi \right >^{-2} ) & \cos (t \xi \left < \xi \right >^{-2} ) \end{pmatrix} \widehat {\vec u_0} (\xi) d \xi
\end{equation}
where $\widehat{\vec {u}_0} $ is the Fourier transform of $ \vec {u}_0 $.  It follows that
\begin{equation*}
|u(t)|_{\infty,\infty}  \le c \Sigma \left | \int^{\infty}_{-\infty} ( \widehat {\eta}_0 \pm  \widehat {v}_0 ) e^{it h( \xi,  \pm x \xi /t )} d \xi \right |,
\end{equation*}
where the sum $ \Sigma $ is over all two sign combinations. Using Lemma \ref{lem_vanderlinear}, we obtain
\begin{align*}
|\vec u(t) |_{\infty, \infty} & \lesssim \int_{|\xi| \ge n} \left ( | \widehat {\eta}_0| + | \widehat {v}_0 | \right ) d \xi +  \left | \int_{| \xi | < n } ( \widehat { \eta}_0  \pm \widehat {v}_0 ) e^{ i t h ( \xi, \pm { x \over t } )} d \xi \right | \\
& \lesssim \| \vec{u}_0 \|_1 \left ( \int_{|\xi| \ge n } ( 1 + \xi^2 )^{-1} d \xi \right )^{1 \over 2} \\
& \quad +  \int_{\mathbb{R}} \LC | \eta_0 (x) | + | v_0 (x) | \RC d x \left | \int_{|\xi | < n} e^{ it h (\xi, \pm {x \over t} )} d \xi \right | \\
& \lesssim n^{-{1 \over 2}}  \| \vec{u}_0 \|_1  +  \left ( \epsilon + t^{ -{1 \over 2}} \max \{ \epsilon^{- {1 \over 2}}, n^{3 \over 2} \} \right )  |\vec{u}_0 |_{1,1}.
\end{align*}
Therefore the estimate \eqref{linest} can be obtained by choosing $ \epsilon = t^{ - { 1 \over 3}} $ and $ n = t^{1 \over 4}. $ Since $ S(t) $ is a unitary group  in $ X^s $ for any $ s \ge 0$, we know $ \vec u(t) = S(t) \vec u_0 \in X^1.$
\end{proof}

\end{appendix}



\begin{thebibliography}{99}

\bibitem{AABCW}
A. A. Alazman, J. P. Albert, J. L. Bona, M. Chen and J. Wu, {\it Comparisons between the BBM equation and a Boussinesq system}, Advances in Differential Equations,  {\bf 11} (2006), 121--166.

\bibitem{AL}
{B. Alvarez-Samaniego, and D. Lannes, } {\it Large time existence for 3D water-waves and asymptotics,} { Invent. Math., } {\bf 171} (2008), 485--541.

\bibitem{BeTao}
I. Bejenaru and T. Tao, {\it Sharp well-posedness and ill-posedness results
for a quadratic non-linear Schr\"{o}dinger equation}, J. Funct. Anal. {\bf 233}
(2006), 228--259.

\bibitem{BBM}
T. B. Benjamin, J. L. Bona and J. J. Mahony, {\em Model equations for long wanves in nonlinear dispersive systems}, Phil. Trans. Royal Soc. London, {\bf 272} (1972), 47--78.

\bibitem {BoCh}
J. L. Bona and H. Chen,  { \it Solitary waves in nonlinear dispersive systems, } Discrete Contin. Dyn. Syst.,  {\bf 2} (2002), 313--378.

\bibitem {BoCh}
J. L. Bona and M. Chen,  { \it A Boussinesq system for two-way propagation of nonlinear dispersive waves, }  Physica D, {\bf 116} (1998), 191--224.

\bibitem{BCS1}
{J. L. Bona, M. Chen, and J. C. Saut,} {\it Boussinesq equations and other systems for smallamplitude
long waves in nonlinear dispersive media. I. Derivation and linear theory,} { J. Nonlinear Sci.,} {\bf 12} (2002), 283--318.

\bibitem{BCS2}
{J. L. Bona, M. Chen, and J. C. Saut,} {\it Boussinesq equations and other systems for small amplitude
long waves in nonlinear dispersive media. II : The nonlinear theory,} { Nonlinearity, }
{\bf 17} (2004), 925--952.

\bibitem{BonaColinLannes}
{J. L. Bona, T. Colin, and D. Lannes,} {\it Long wave approximations for water waves,} { Arch.
Ration. Mech. Anal., } {\bf 178} (2005), 373--410.

\bibitem{BoTz}
J. L. Bona and N. Tzvetkov {\it Sharp well-posedness results for the BBM equation},
Discrete Contin. Dyn. Syst. {\bf 23} (2009), 1241--1252.

\bibitem{Bous}
{J. Boussinesq,} {\it Th\'eorie g\'en\'erale des mouvements qui sont propag\'es dans un canal rectangulaire horizontal,} { Comptes Rendus Acad. Sci. Paris, } {\bf 73} (1871), 256--260.

\bibitem{CH1}
R. Camassa and D. Holm, {\em An integrable shallow water equation with peaked solitons}, Phys. Rev. Lett. \textbf{71} (1993), 1661--1664.

\bibitem{Chazel}
{F. Chazel, } {\it Influence of topography on water waves,} { ESAIM: M2AN} {\bf 41}(4), (2007), 771--799.

\bibitem {Ch1}
M. Chen,  { \it Exact solution of various Boussinesq systems}, Appl. Math. Lett., {\bf 11} (1998), 45--49.

\bibitem {Ch2}
M. Chen,  { \it Solitary-wave and multi-pulsed traveling-wave solutions of Boussinesq Systems, } Applic. Anal., {\bf 75} (2000), 213--240.

\bibitem{CMSW}
R. M. Chen, J. L. Marzuola, D. Spirn and J. D. Wright, {\it Illposedness of the gravity-capillary equations,} arXiv:1111.5361.



\bibitem{Co1}
A. Constantin, {\em On the Cauchy problem for the periodic Camassa-Holm equation},
J. Differential Equations, \textbf{141} (1997), 218--235.

\bibitem{Co2}
A. Constantin, {\em Global existence of solutions and breaking waves for a shallow
water equation: A geometric approach}, Ann. Inst. Fourier (Grenoble), \textbf{50} (2000),
321--362.

\bibitem{Co3}
A. Constantin, {\em On the blow-up of solutions of a periodic shallow water equation},
J. Nonlinear Sci., \textbf{10} (2000), 391--399.

\bibitem{CE}
A. Constantin and J. Escher, {\em Wave breaking for nonlinear nonlocal shallow water equations}, Acta Math., \textbf{181} (1998), 229--243.

\bibitem{Craig}
{W. Craig,} {\it An existence theory for water waves, and Boussinesq and Korteweg-de Vries
scaling limits,} { Commun. PDE, } {\bf 10} (1985), 787--1003.

\bibitem{DMS}
{V. Dougalis, D. Mitsotakis and J. C. Saut,} {\it On some Boussinesq systems in two space dimensions
: theory and Numerical Analysis}, {ESAIM : Mathematical Modelling and Numerical Analysis } {\bf 41}, no. 5, (2007), 825--854.



\bibitem{KanoNishida}
{K. Kano and T. Nishida,} {\it A mathematical justification for Korteweg-de Vries equation and
Boussinesq equation of water surface waves,} {Osaka J. Math, } {\bf 23} (1986), 389--413.

\bibitem{KP}
T. Kato and G. Ponce, \emph{Commutator estimates and the Euler and Navier-Stokes equations}, Comm. Pure Appl. Math. 41 (1988), 891--907.

\bibitem{LPS}
{F. Linares, D. Pilod and J. C. Saut, } {\it Well-posedness of strongly dispersive two-dimensional surface waves Boussinesq systems}, arXiv: 1103. 4159.

\bibitem {Liu}
Y. Liu,  { \it Instability of solitary waves for generalized Boussinesq equations, } J. Dynamics Diff.  Eqns.,   {\bf 3} (1993), 537--558.

\bibitem{Mc}
H. P. Mckean, {\em Breakdown of a shallow water equation}, Asian J. Math., \textbf{2} (1998),
867--874.

\bibitem{MSZ}
{M. Ming, J. C. Saut and P. Zhang,} {\it Long-time existence of solutions to Boussinesq systems}, preprint.


\bibitem {SoSt}
P. Souganidis and W. Strauss, {\it Instability of a class of dispersive solitary waves,} Proc. Roy. Soc. Edinburgh, {\bf 114A} (1990), 195--212.

\bibitem {St}
E. M. Stein, {\it Oscillatory Integrals in Fourier Analysis, } Beijing Lectures in Harmonic Analysis, Princeton University Press, 1986, 307--355.

\bibitem{Ursell}
{F. Ursell,} {\it The long-wave paradox in the theory of gravity waves,} { Proc. Cambridge Phil. Soc., }
{\bf 49} (1953),  685--694.

\bibitem{Wh}
G. B. Whitham, {\em Linear and Nonlinear Waves}, John Wiley \& Sons, New York, 1980.

\end{thebibliography}
\end{document}